\newcommand{\supp}{\operatorname{supp}}
\newcommand{\Rr}{{\mathbb{R}}}
\newcommand{\Nn}{{\mathbb{N}}}
\newcommand{\Ll}{{\mathcal{L}}}
\newcommand{\Aa}{{\mathcal{A}}}
\newcommand{\Ii}{{\mathcal{I}}}
\newcommand{\Pp}{{\mathcal{P}}}
\newcommand{\Uu}{{\mathcal{U}}}
\newcommand{\Ww}{{\mathcal{W}}}
\newcommand{\Mm}{{\mathcal{M}}}
\def\dx{{\rm d}x}
\def\dy{{\rm d}y}
\def\dt{{\rm d}t}
\def\ds{{\rm d}s}
\def\leq{\leqslant}
\def\geq{\geqslant}
\numberwithin{equation}{section}
\newtheoremstyle{thmlemcorr}{10pt}{10pt}{\itshape}{}{\bfseries}{.}{10pt}{{\thmname{#1}\thmnumber{
			#2}\thmnote{ (#3)}}}
\newtheoremstyle{thmlemcorr*}{10pt}{10pt}{\itshape}{}{\bfseries}{.}\newline{{\thmname{#1}\thmnumber{
\newtheoremstyle{defi}{10pt}{10pt}{\itshape}{}{\bfseries}{.}{10pt}{{\thmname{#1}\thmnumber{
			#2}\thmnote{ (#3)}}}
\newtheoremstyle{remexample}{10pt}{10pt}{}{}{\bfseries}{.}{10pt}{{\thmname{#1}\thmnumber{
			#2}\thmnote{ (#3)}}}
\newtheoremstyle{ass}{10pt}{10pt}{}{}{\bfseries}{.}{10pt}{{\thmname{#1}\thmnumber{
			A#2}\thmnote{ (#3)}}}
\theoremstyle{thmlemcorr}
\newtheorem{theorem}{Theorem}
\numberwithin{theorem}{section}
\newtheorem{corollary}[theorem]{Corollary}
\newtheorem{proposition}[theorem]{Proposition}
\theoremstyle{thmlemcorr*}
\newtheorem{theorem*}{Theorem}
\newtheorem{lemma*}[theorem]{Lemma}
\newtheorem{corollary*}[theorem]{Corollary}
\newtheorem{proposition*}[theorem]{Proposition}
\newtheorem{problem*}[theorem]{Problem}
\newtheorem{conjecture*}[theorem]{Conjecture}
\theoremstyle{defi}
\newtheorem{hyp}{Assumption}
\newtheorem{problem}{Problem}
\theoremstyle{remexample}
\newtheorem{remark}[theorem]{Remark}
\newtheorem{pro}[theorem]{Proposition}
\newtheorem{cor}[theorem]{Corollary}
\theoremstyle{ass}
\begin{document}

\title{A Variational  Approach For Price Formation Models
	In One Dimension}

\author{Yuri Ashrafyan}\thanks{King Abdullah University of Science and Technology (KAUST), CEMSE Division, Thuwal 23955-6900. Saudi Arabia. e-mail: yuri.ashrafyan@kaust.edu.sa}%
\author{Tigran Bakaryan}\thanks{King Abdullah University of Science and Technology (KAUST), CEMSE Division, Thuwal 23955-6900. Saudi Arabia. e-mail: tigran.bakaryan@kaust.edu.sa}%
\author{Diogo Gomes}\thanks{King Abdullah University of Science and Technology (KAUST), CEMSE Division, Thuwal 23955-6900. Saudi Arabia. e-mail: diogo.gomes@kaust.edu.sa}%
\author{Julian Gutierrez}\thanks{King Abdullah University of Science and Technology (KAUST), CEMSE Division, Thuwal 23955-6900. Saudi Arabia. e-mail: julian.gutierrezpineda@kaust.edu.sa}%

%\affil[1]{King Abdullah University of Science and Technology - KAUST}%
%\affil[2]{Affiliation not available}%
%\affil[3]{Kwame Nkrumah University of Science and Technology}%

%\author{Diogo A.
%  Gomes}
%\address[D.~A.~Gomes]{
%        King Abdullah University of Science and Technology (KAUST), CEMSE Division, Thuwal 23955-6900.
% Saudi Arabia, and  
%        KAUST SRI, Center for Uncertainty Quantification in Computational Science and Engineering.}
%\email{diogo.gomes@kaust.edu.sa}
%\author{John Doe}
%\address[J.~Doe]{
%        The University Address}
%\email{doe@doe.tv}

\keywords{Mean Field Games; Price formation; Potential Function, Lagrange multiplier}
%\subjclass[2010]{
%        35J47, %Second order elliptic systems
%        35A01} %Existence problems: global existence, local existence, non-existence

\thanks{
      The authors were partially supported by King Abdullah University of Science and Technology (KAUST) baseline funds and KAUST OSR-CRG2021-4674.
}
\date{\today}
%--------------ABSTRACT--------------------
%--------------ABSTRACT--------------------
%--------------ABSTRACT--------------------
%--------------ABSTRACT--------------------
%--------------ABSTRACT--------------------
%--------------ABSTRACT--------------------
%--------------ABSTRACT--------------------
%--------------ABSTRACT--------------------
\begin{abstract}

In this paper, we study a class of first-order mean-field games (MFGs) that model price formation. Using Poincar\'e Lemma, we eliminate one of the equations and obtain a variational problem for a single function. This variational problem offers an alternative approach for the numerical solution of the original MFGs system. We show a correspondence between solutions of the MFGs system and the variational problem. Moreover, we address the existence of solutions for the variational problem using the direct method in the calculus of variations. We end the paper with numerical results for a linear-quadratic model.

\end{abstract}

\maketitle

%--------------INTRODUCTION--------------------
%--------------INTRODUCTION--------------------
%--------------INTRODUCTION--------------------
%--------------INTRODUCTION--------------------
%--------------INTRODUCTION--------------------
%--------------INTRODUCTION--------------------
%--------------INTRODUCTION--------------------
%--------------INTRODUCTION--------------------

\section{Introduction}
Here, we consider the numerical solution of the first-order mean-field games (MFGs) system introduced in \cite{gomes2018mean} to model price formation. The solution to this system determines the price $\varpi$ of a commodity with supply $Q$ when a large group of rational agents trades that commodity. The original price problem reads as follows:

\begin{problem} \label{PMFG} 
Suppose that $m_0\in \mathcal{P}(\Rr)$, $H\in C^1(\Rr)$, and $Q$, $V$, and $u_T$ are continuous. Assume further that $H$ is uniformly convex. Find $u,m: [0,T]\times \Rr \to \Rr$ and $\varpi:[0,T]\to \Rr$ satisfying  $m\geq 0$,
\begin{equation}\label{eq:MFG system}
\begin{cases}
-u_t +H(\varpi + u_x )+V(x)=0 & [0,T]\times \Rr,
\\
m_t - \left(H^\prime(\varpi + u_x)m\right)_x =0 & [0,T]\times \Rr,
\\
-\int_{\Rr} H^\prime(\varpi + u_x)m \dx = Q(t) & [0,T], 
\end{cases}
\end{equation}
and
\begin{equation}\label{boundaryP}
	\begin{cases} 
	m(0,x)=m_0(x) &  \\ u(T,x)=u_T(x)& 
	\end{cases} x \in \Rr.
\end{equation} 
\end{problem}  

The existence of solutions $(u,m,\varpi)$ to the previous problem was proved in \cite{gomes2018mean}. The first equation is solved in the viscosity sense by the value function of a typical player $u\in C([0,T]\times \Rr)$. The second equation is solved in the distributional sense by the probability distribution of the agents, $m\in C([0,T],\Pp(\Rr))$. The price $\varpi$ is a continuous function on $[0,T]$. 

Price formation models offer a load-adaptive pricing strategy relevant in energy markets. For instance, \cite{ATM19} and \cite{alasseur2021mfg} modeled intraday electricity markets, obtaining a price from the solution of forward-backward equations. In  \cite{FTT20}  and  \cite{fujii2021equilibrium} authors  studied the effects of a major player in the market.  The latest paper considered $N$-agent setting.  A deterministic  $N$-agent price model was studied in \cite{SummerCamp2019}. A MFG  model of homogeneous agents for the electricity markets was considered in  \cite{feron2021price}. 
 In \cite{aid2020equilibrium}, the price equilibrium is obtained for a finite number of agents who optimally control their production and trading rates in order to satisfy a demand subjected to common noise. Stackelberg games for price formation under revenue optimization were proposed in \cite{BS02} and \cite{BS10}, and Cournot models in \cite{TBD20}. A MFG of optimal switching was presented in \cite{AidDumitrescuTankov2021} to model the transition to renewable energies. Other works incorporating market-clearing conditions are \cite{JSF20} and \cite{FT20}, the former specialized to Solar Renewable Energy Certificate Markets and the latter in exchange markets. The stochastic supply case was studied in \cite{GoGuRi2021}, where authors obtained a price from a Lagrange multiplier rule for the balance constraint.

The standard MFG system exhibits a coupling of two partial differential equations with initial and terminal conditions (see for example \cite{cardaliaguet2018short}). Several numerical methods have been proposed to solve these MFG systems. Finite differences schemes and Newton-based methods were introduced in \cite{CDY} and \cite{DY}. A recent survey can be found in \cite{Achdou2020}. Optimization methods and Fourier series approximations were proposed in \cite{Yang2021}. Machine learning methods have been studied in \cite{carmona2021convergenceergo},  \cite{carmona2021convergence}, \cite{ruthotto2020machine}, and \cite{Line2024713118}. However, the MFG system \eqref{eq:MFG system}-\eqref{boundaryP} not only couples a forward equation for $m$ with a backward equation for $u$ but also determines the coupling term $\varpi$ through an integral constraint, which is the third equation in \eqref{eq:MFG system}. Therefore, the numerical approximation of the solution $(u,m,\varpi)$ of Problem \ref{PMFG} is challenging, and the main application of our methods is a novel numerical scheme for Problem \ref{PMFG}.

The word Potential in MFGs is used in two unrelated contexts. Potential MFGs (\cite{ll2}, \cite{cardaliaguet2018short}, \cite{OrPoSa2018}) are MFG systems given by the first-order optimality conditions of a minimization problem. Previously, standard optimization techniques were used for its numerical solution (\cite{bonnans2021discrete}). In contrast, our potential approach relies on the structure of the continuity equation and Poincar{\'e} lemma (\cite{Csato2011ThePE}, Theorem 1.22). We introduce a potential functional that integrates the transport equation in \eqref{eq:MFG system}. 

Poincar{\'e} lemma was used for the continuity equation in \cite{DRT2021Potential} for the MFG planning problem. The authors obtained a variational problem for a potential function by eliminating one of the equations in the MFG system. Moreover, the solution $(u,m)$ of the planning MFG can be recovered using only the solution of the variational problem. The structure of the MFG planning problem differs from that in Problem \ref{PMFG} in two critical aspects: the initial-terminal conditions and the way the constraint couples the equations.

%Here, we adopt a novel approach for the numerical solution of the MFG system \eqref{eq:MFG system}-\eqref{boundaryP}. 
In Section \ref{sec:Derivation of the variaitonal problem}, we use the existence result for Problem \ref{PMFG} provided in \cite{gomes2018mean} to formally obtain a potential function, $\varphi$. We show that \eqref{eq:MFG system} corresponds to the Euler-Lagrange equation of a constrained variational problem depending on $\varphi$. To introduce this problem, let $F$ be the  Legendre transform of $H$; that is,
\begin{equation}\label{eq:Legendre transform}
	F(y)=\sup_{p \in \Rr}  \left[  p y - H(p) \right], \quad y \in \Rr,
\end{equation}
and let $L:\Rr\times\Rr_0^+\to \Rr_0^+$ be given by
\begin{equation}\label{L def}
L(z,y) = \begin{cases} F\left( \frac{z}{y}\right) y, & (z,y) \in \Rr\times\Rr^+,
\\
+\infty, & z\neq 0,\, y=0,
\\
0, & z=0, \,y= 0. \end{cases}
\end{equation}
The constrained variational problem is
\begin{problem} \label{problem:Variational} 
Suppose that $m_0\in \mathcal{P}(\Rr)$, $H$ is uniformly convex, and $Q$, $V$, and $u_T$ are continuous. Find $\varphi: [0,T]\times \Rr \to \Rr$ that minimizes the functional
\begin{equation*}%\label{Variational problem functional}
\varphi \mapsto \int_0^T \int_{\Rr} L(\varphi_t,\varphi_x) -V(x)\varphi_x -u^\prime_T(x) \varphi_t~\dx\dt, 
\end{equation*}
over the set of functions such that $\varphi_x(t,\cdot)$ is a probability density on $\Rr$ for $t\in[0,T]$, $\varphi_x(0,\cdot)=m_0(\cdot)$, and satisfying
\begin{equation}\label{eq-bal-in-phi}
		\int_{\Rr} \varphi(t,x) - M_0(x) \dx = -\int_0^t Q(s)\ds, \quad  t\in [0,T].
\end{equation}
\end{problem} 
We work under assumptions similar to those in \cite{gomes2018mean} used to prove the existence and uniqueness of solutions to Problem \ref{PMFG}. The precise statement of our assumptions is presented in Section \ref{sec:Assumption}. We rigorously study Problem \ref{problem:Variational} in Section \ref{sec:Variational Approach compact}, where we show that its formulation is independent of the solution $(u,m,\varpi)$ of Problem \ref{PMFG}, and relies only on problem data. In Section \ref{sec:Price as lagrangem}, we obtain the existence of a price $\varpi$ in \eqref{eq:MFG system} as a Lagrange multiplier, and we establish the following connection between solutions of Problems \ref{PMFG}  and \ref{problem:Variational}:
	\begin{theorem}\label{pro-connection}
Suppose that $\varphi\in C^2([0,T]\times\Rr)$ solves Problem \ref{problem:Variational} . Then, the solution $(u,m,\varpi)$ of Problem \ref{PMFG} admits the representation
\begin{align*}
\begin{cases}
	 u(t,x) = u_T(x) - \int_t^T H\left(F'\left(\frac{\varphi_t(s,x)}{\varphi_x(s,x)}\right)\right) \ds - (T-t)V(x),\quad &(t,x)\in [0,T]\times \Rr
\\
m(t,x)=\varphi_{x}(t,x), \quad &(t,x)\in [0,T]\times \Rr
\\
 \varpi(t) = w_T-\int_t^T w(s)\ds, \quad &t\in[0,T], 	
\end{cases}
\end{align*}
where $w_T = \int_{\Rr} (L^*_z(T,y)-u'_T(y))\varphi_x(T,y) \dy$, 
\[
w(s) = \int_{\Rr} \bigg( \left(L^*_z(s,y)\right)_t +  \left(L^*_y(s,y)-V(y)\right)_x \bigg) \varphi_x(s,y) \dy , \quad s\in[0,T],
\]
and $L^*_z$ and $L^*_y$ are defined in \eqref{eq:Lzdef} and \eqref{eq:Lydef}, respectively.
\end{theorem}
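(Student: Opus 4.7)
The plan is to construct the triple $(u,m,\varpi)$ from $\varphi$ according to the stated formulas, verify that it solves Problem \ref{PMFG}, and conclude via uniqueness of the MFG solution established in \cite{gomes2018mean}. First, I would derive the first-order optimality conditions for Problem \ref{problem:Variational}. Introducing a Lagrange multiplier $\lambda:[0,T]\to\Rr$ for the family of scalar constraints \eqref{eq-bal-in-phi} and performing the variational calculation over smooth perturbations $\delta\varphi$ with $\delta\varphi_x(0,\cdot)=0$ (which forces $\delta\varphi(0,\cdot)=0$ by decay) and $\int\delta\varphi(t,x)\dx=0$, one obtains the Euler--Lagrange equation
\[
\partial_t L^*_z + \partial_x\bigl(L^*_y - V(x)\bigr) = -\lambda(t),
\]
together with the transversality condition that $L^*_z(T,x)-u_T'(x)$ is constant in $x$. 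A direct computation from $L(z,y)=F(z/y)y$ using the Legendre relation between $H$ and $F$ gives $L_z(z,y)=F'(z/y)$ and $L_y(z,y)=-H(F'(z/y))$.

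Next, I would set $m:=\varphi_x$, so admissibility of $\varphi$ furnishes $m(0,\cdot)=m_0$, and define $u$ by the theorem's formula. Differentiating in $t$ yields $u_t = H(L^*_z) + V(x)$, which matches the Hamilton--Jacobi equation in \eqref{eq:MFG system} provided one identifies $L^*_z = \varpi + u_x$. To establish this identification (the crux of the argument), set $p := L^*_z$: the EL equation then reads $p_t - H'(p)p_x = V'(x) - \lambda(t)$, while differentiating $u_t = H(p)+V(x)$ in $x$ gives $u_{xt} = H'(p)p_x + V'(x)$. Subtracting yields $(p-u_x)_t = -\lambda(t)$, which is independent of $x$. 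Combined with the transversality condition at $t=T$ --- which, when tested against the probability density $\varphi_x(T,\cdot)$, identifies the spatially constant value as $w_T$ --- this forces $p(t,x)-u_x(t,x)$ to depend only on $t$. I then set $\varpi(t):=p(t,x)-u_x(t,x)$, obtaining $\varpi'(t)=-\lambda(t)$ and $\varpi(T)=w_T$. Integrating the EL equation against $\varphi_x(s,\cdot)$ gives $w(s)=-\lambda(s)=\varpi'(s)$, whence $\varpi(t)=w_T-\int_t^T w(s)\ds$, recovering the stated representation.

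Finally, I would verify the remaining MFG equations. Legendre inversion gives $H'(p)=H'(F'(\varphi_t/\varphi_x))=\varphi_t/\varphi_x$, so $H'(\varpi+u_x)m = \varphi_t$; the continuity equation $m_t-(H'(\varpi+u_x)m)_x=0$ then reduces to the Schwarz identity $\varphi_{xt}=\varphi_{tx}$, which holds since $\varphi\in C^2$. The market-clearing condition $-\int_{\Rr} H'(\varpi+u_x)m\dx = Q(t)$ follows by integrating $\varphi_t = H'(\varpi+u_x)\varphi_x$ in $x$ and using \eqref{eq-bal-in-phi} differentiated in $t$.

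The main obstacle is the step showing that $L^*_z-u_x$ depends only on $t$: this requires carefully combining the Euler--Lagrange equation (spatial constancy of the time derivative) with the transversality condition at $t=T$ (spatial constancy at the terminal time), as well as justifying the vanishing of spatial boundary terms in the variational derivation and managing the continuum family of scalar constraints indexed by $t$ to produce a well-defined scalar multiplier $\lambda(t)$.
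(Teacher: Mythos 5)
Your strategy is sound and the pointwise computations check out (in particular $L_z=F'(z/y)$, $L_y=-H(F'(z/y))$, and the identity $(L^*_z-u_x)_t=-\lambda(t)$), but you travel in the opposite direction from the paper, and the comparison is worth making explicit. You go \emph{forward}: from the minimizer $\varphi$, derive Euler--Lagrange conditions with a multiplier $\lambda$, build a candidate $(u,m,\varpi)$, verify all three equations of \eqref{eq:MFG system} plus \eqref{boundaryP}, and invoke uniqueness of the MFG solution from \cite{gomes2018mean}. The paper goes \emph{backward}: it takes the known MFG solution, forms its potential via \eqref{eq:Potential in terms of MFGs}, observes that this potential satisfies \eqref{eq: Euler-Lagrange wrt potential}--\eqref{boundary-in-phi} and hence, by convexity of \eqref{eq: functional}, is a minimizer; uniqueness of the minimizer (Proposition \ref{pro-unique}) then identifies it with $\varphi$, after which $u$ and $m$ are read off from \eqref{eq: potential relations} and $\varpi$ from the Lagrange multiplier of Proposition \ref{pro-lag-mult}. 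Your route has the appeal of actually \emph{producing} a solution of the MFG system from the variational problem rather than merely matching against one, but it pays for this by needing the full strength of the multiplier rule, whereas the paper only needs it to write the representation formula for $\varpi$.

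The two obstacles you name at the end are real and are not resolved in your sketch; they are precisely where the paper spends its effort. First, the existence of a scalar multiplier $\lambda(t)$ for the continuum of constraints \eqref{eq-bal-in-phi} cannot be obtained by testing with arbitrary sign-changing $\delta\varphi$ satisfying $\int_{\Rr}\delta\varphi\,\dx=0$, because admissible competitors must also keep $\varphi_x\geq 0$ and $\varphi_x(t,\cdot)$ a probability density; on the degenerate set $\{\varphi_x=0\}$ only one-sided variations are available. The paper's Proposition \ref{pro-lag-mult} handles this by constructing admissible perturbations of the form $\varphi^{\varepsilon}=(1-\varepsilon)\varphi(\cdot\,-\varepsilon(\overline{x}-\overline{z}))+\varepsilon\psi(\cdot\,-\varepsilon(\overline{x}-\overline{z}))$, which preserve all constraints by convexity and a recentering trick, and then obtains only the inequality \eqref{eq:Aux first variation} before localizing. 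Note also that the support condition \eqref{eq:R1 selection} used there is imported from the MFG solution via Corollary \ref{corollary-m-comp}, so even the multiplier construction is not fully independent of \cite{gomes2018mean}. Second, on $\{\varphi_x=0\}$ the expression $F'(\varphi_t/\varphi_x)$ in the formula for $u$ and your identification $p=\varpi+u_x$ degenerate; your verification of the Hamilton--Jacobi and continuity equations is only valid where $\varphi_x>0$, and some statement about the complement (as in \eqref{phi-prop} and the definitions \eqref{eq:Lzdef}--\eqref{eq:Lydef}) is needed to close the argument. If you supply these two ingredients --- essentially reproving Proposition \ref{pro-lag-mult} --- your converse construction goes through and yields the same representation.
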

Because Problem \ref{problem:Variational} is a convex minimization problem, we approximate its solution $\varphi$ by using standard optimization methods. Furthermore, using the approximations for $\varphi$ and Theorem \ref{pro-connection}, we obtain  efficient approximation methods for the solution to Problem \ref{PMFG}. In Section \ref{sec:numerical results}, we illustrate the implementation of our approach for the linear-quadratic setting, for which explicit formulas are provided in \cite{gomes2018mean} that can be used as benchmarks. For all these benchmarks, our numerical method provides accurate approximations.

\section{Derivation of the variational  problem}\label{sec:Derivation of the variaitonal problem}
In this section, we present a formal derivation of the variational problem for the potential function using the solution of the MFGs system. The precise assumptions we work with are stated in Section \ref{sec:Assumption}. The rigorous statement of the variational problem is given in Section \ref{sec:Variational Approach compact}, where we no longer rely on the solution of the MFGs system.

Let $(u,m,\varpi)$ solve Problem \ref{PMFG} with $m>0$. 
Then, the second equation in \eqref{eq:MFG system} can be
written as 
\[
	\text{div}_{(t,x)}\left( m , - H^\prime(\varpi + u_x)m \right) = 0, \quad [0,T]\times \Rr.
\]
The previous equation combined with Poincar{\'e} lemma (see \cite{Csato2011ThePE}, Theorem 1.22) gives the existence of a function (the potential) $\varphi : [0,T]\times \Rr \to \Rr$ such that 
\begin{equation}\label{eq: potential relations}
\begin{cases}
 m=	\varphi_x, \\ H^\prime(\varpi + u_x)m=\varphi_t.
\end{cases}
\end{equation}
%Using the previous and \eqref{eq:MFG system}, $\varphi$ is given by
%\begin{equation}\label{eq:Potential(MFG) explicit}
%	\varphi(t,x) = \int_{-\infty}^x m_0(y) \dy + \int_0^t H'(\varpi(s) + u_x(s,x)) m(s,x) \ds + C, \quad C \in \Rr.
%\end{equation}
Because $H$ is uniformly convex, $H^{\prime}$ is strictly monotone. Therefore, by \eqref{eq:Legendre transform}, we have
\begin{equation}\label{eq: Legendre tr}
F^\prime(y)=\left( H^{\prime}\right)^{-1}(y).
\end{equation}
Hence, from  the second  equation in \eqref{eq: potential relations}, we deduce that
\begin{equation*}%\label{u_x-in-phi}
	u_x  = F^{\prime} \left( \frac{\varphi_t }{\varphi_x}\right)-\varpi.
\end{equation*}

If $V\in C^1(\Rr)$, and $u$ is twice differentiable, we differentiate the Hamilton-Jacobi equation in  \eqref{eq:MFG system} with respect to $x$ to obtain
\[
	-(u_x)_t + \left(H(\varpi + u_x ) \right)_x +V^\prime=0.
\]
Thus, the system \eqref{eq:MFG system} in terms of $\varphi$ is reduced to the following two equations
\begin{equation}\label{eq: Euler-Lagrange wrt potential}
	\begin{cases}
	-\left(F^{\prime} \left( \frac{\varphi_t }{\varphi_x}\right)-\varpi \right)_t + \left(H\left(F^{\prime} \left( \frac{\varphi_t }{\varphi_x}\right)\right)\right)_x+V^\prime=0 & [0,T] \times \Rr,
	\\
	-\int_{\Rr}  \varphi_t + Q \varphi_x ~ \dx =0 & t \in  [0,T],
	\end{cases}
\end{equation}
with initial condition, $\varphi(0,x)=\int_{-\infty}^x m_0(y)\dy\,\ x \in \Rr$, and terminal condition
\begin{equation}\label{boundary-in-phi}
 F^{\prime} \left( \frac{\varphi_t(T,x) }{\varphi_x(T,x)}\right)-\varpi(T)=u^\prime_T(x) \quad x \in \Rr.
\end{equation}

\begin{remark}\label{rem-phi-in-u-m}
Notice that the initial condition
implies that $\varphi_x(0,x)=m_0(x)$, $x\in\Rr$, which is the first equation in \eqref{boundaryP}. Moreover, we have the following explicit formula for $\varphi$ in terms of the solution $(u,m,\varpi)$ of \eqref{eq:MFG system} and \eqref{boundaryP} 
\begin{equation}\label{eq:Potential in terms of MFGs}
	\varphi(t,x)=\int_{-\infty}^{x} m_0(y)\dy + \int_0^t H'(\varpi(s)+u_x(s,x))m(s,x)\ds,\quad (t,x) \in [0,T]\times \Rr.
\end{equation}
Therefore, the potential function $\varphi$, which in principle has a closed formula arising from the solution of \eqref{eq:MFG system} and \eqref{boundaryP}, can be characterized using the initial condition with $m_0$, \eqref{eq: Euler-Lagrange wrt potential} and \eqref{boundary-in-phi}, which depend only, up to $\varpi$, on problem data. 
\end{remark}

\begin{remark}%\label{rem:No use of first formula}
Notice that the first equation in \eqref{eq: Euler-Lagrange wrt potential} shows that the expression 
\[
	-\left(F^{\prime} \left( \frac{\varphi_t }{\varphi_x}\right) \right)_t + \left(H\left(F^{\prime} \left( \frac{\varphi_t }{\varphi_x}\right)\right)\right)_x+V^\prime
\]
is independent of $x\in\Rr$, so it is a function of time only and equal to $\dot{\varpi}$. Similarly, \eqref{boundary-in-phi} shows that
\[
	F^{\prime} \left( \frac{\varphi_t(T,x) }{\varphi_x(T,x)}\right)-u^\prime_T(x) 
\]
is independent of $x\in\Rr$, and equal to the constant $\varpi(T)$. Because any numerical method to compute $\varphi$ provides an approximation of the value $\varphi(t,x)$, we can not expect the numerical approximation to be independent of $x$ in \eqref{eq: Euler-Lagrange wrt potential} and \eqref{boundary-in-phi}. Therefore, we can not rely on these formulas to recover $\varpi$ using an approximation of $\varphi$. In Section \ref{sec:Price as lagrangem}, we provide a formula approximating $\varpi$ that averages the dependence on $x$, and thus, can be implemented with any approximation of the potential.
\end{remark}

Next, consider the functional
\begin{equation}\label{functional-v}
	 	\begin{split}
	 	\int_0^T \int_{\Rr} L(\varphi_t,\varphi_x)-V\varphi_x \dx\dt  - \int_{\Rr} u^\prime_T(x)(\varphi(T,x)-\varphi(0,x)) ~\dx
	 	\end{split}
\end{equation}
subject to $\int_{\Rr} -(\varphi_t + Q \varphi_x)\dx=0$ on $[0,T]$, and with initial condition $\varphi(0,x)=\int_{-\infty}^x m_0(y)\dy$. Using the augmented functional associated with the constraint $\int_{\Rr} -(\varphi_t + Q \varphi_x)\dx=0$, we show that \eqref{eq: Euler-Lagrange wrt potential} is an Euler-Lagrange equation. Thus, we introduce a Lagrange multiplier $\varpi:[0,T]\to \Rr$ for the integral constraint, and we define
\begin{equation}\label{eq: functional}
\begin{split}
\tilde{I}[\varphi,\varpi]:=\int_0^T \int_{\Rr} L(\varphi_t,\varphi_x)   -\varpi\left(  \varphi_t+Q\varphi_x~ \right) -V\varphi_x-u^\prime_T(x) \varphi_t~\dx\dt   
\end{split}
\end{equation}
with initial condition $\varphi(0,x)=\int_{-\infty}^x m_0(y)\dy$. By considering critical points $(\varphi,\varpi)$ of the previous functional, we obtain that \eqref{eq: Euler-Lagrange wrt potential} is the corresponding Euler-Lagrange equation, with the natural boundary condition \eqref{boundary-in-phi}.

\begin{pro}\label{Pro: EL} 
Let $(\varphi,\varpi)$ be a critical point of the functional \eqref{eq: functional} over $C^2([0,T]\times\Rr)\times C^1([0,T])$ satisfying  $\varphi(0,x)=\int_{-\infty}^x m_0(y)\dy$. Assume further that $\varphi_x>0$. Then, the corresponding Euler-Lagrange equation is equivalent to \eqref{eq: Euler-Lagrange wrt potential}-\eqref{boundary-in-phi}.
\end{pro}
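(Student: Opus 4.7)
The plan is to compute the first variation of $\tilde{I}$ separately with respect to the multiplier $\varpi$ and the potential $\varphi$, and read off the Euler--Lagrange equations together with the natural boundary condition at $t=T$. Since $\varphi_x>0$ on $[0,T]\times\Rr$, the Lagrangian $L(z,y)=F(z/y)\,y$ is smooth at every relevant point, so the computation is legitimate.

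The variation in $\varpi$ is immediate: perturbing by an arbitrary $\delta\varpi\in C^1([0,T])$ yields
\[
\frac{d}{d\epsilon}\bigg|_{\epsilon=0}\tilde I[\varphi,\varpi+\epsilon\delta\varpi]=-\int_0^T\!\delta\varpi(t)\!\int_{\Rr}\bigl(\varphi_t+Q\,\varphi_x\bigr)\dx\dt,
\]
whose vanishing for all $\delta\varpi$ gives the integral constraint, i.e.\ the second equation of \eqref{eq: Euler-Lagrange wrt potential}.

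For the variation in $\varphi$, the key preliminary calculation is the pair of partial derivatives of $L$. A direct differentiation of $L(z,y)=F(z/y)\,y$ gives $L_z=F'(z/y)$, and using the Legendre duality $F(y)-F'(y)y=-H(F'(y))$ (which follows from \eqref{eq:Legendre transform} and \eqref{eq: Legendre tr}) one obtains $L_y=-H\bigl(F'(z/y)\bigr)$. Given an admissible perturbation $\delta\varphi\in C^2([0,T]\times\Rr)$ with $\delta\varphi(0,\cdot)=0$ (so the fixed initial condition is respected) and suitable decay in $x$, the first variation becomes
\[
\int_0^T\!\!\int_{\Rr}\bigl[L_z-\varpi-u_T'(x)\bigr]\delta\varphi_t+\bigl[L_y-\varpi Q-V\bigr]\delta\varphi_x\dx\dt.
\]
I would then integrate by parts in $t$ (picking up only a boundary term at $t=T$) and in $x$ (no boundary term). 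Noting that $\varpi Q$ and $u_T'$ have no $x$ and $t$ dependence respectively, this yields
\[
-\!\int_0^T\!\!\int_{\Rr}\!\!\Bigl[(L_z-\varpi)_t+(L_y-V)_x\Bigr]\delta\varphi\dx\dt+\!\int_{\Rr}\!\!\bigl[L_z(T,x)-\varpi(T)-u_T'(x)\bigr]\delta\varphi(T,x)\dx=0.
\]
Taking $\delta\varphi$ compactly supported in $(0,T)\times\Rr$ and using the fundamental lemma of calculus of variations delivers $(L_z-\varpi)_t+(L_y-V)_x=0$, and substituting the formulas for $L_z$ and $L_y$ and rearranging signs recovers the first equation of \eqref{eq: Euler-Lagrange wrt potential}. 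Allowing $\delta\varphi(T,\cdot)$ to be arbitrary then forces the bracket in the boundary integral to vanish pointwise, which is exactly \eqref{boundary-in-phi}.

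The converse direction is just a reversal of these steps: starting from \eqref{eq: Euler-Lagrange wrt potential}--\eqref{boundary-in-phi} one multiplies by $\delta\varphi$, integrates by parts, and recognises the sum as the vanishing first variation. I do not anticipate a real obstacle here; the only place where one must be attentive is the Legendre identity that converts $L_y$ into $-H\circ F'$, and the careful bookkeeping of signs when moving from the compact form $(L_z-\varpi)_t+(L_y-V)_x=0$ to the form written in \eqref{eq: Euler-Lagrange wrt potential}.
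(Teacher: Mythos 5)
Your proposal is correct and follows essentially the same route as the paper: vary in $\varpi$ to get the balance constraint, vary in $\varphi$ with perturbations vanishing at $t=0$ to get the Euler--Lagrange equation plus the natural boundary condition at $t=T$, and use $\varphi_x>0$ together with Legendre duality to rewrite $L_z$ and $L_y$ in terms of $F'$ and $H$. The only (immaterial) difference is that you invoke the pointwise identity $L_y=-H\bigl(F'(z/y)\bigr)$ directly, whereas the paper differentiates $L_y$ in $x$ and uses $H'(F'(v))=v$ to match $(L_y)_x$ with $-\bigl(H(F'(\varphi_t/\varphi_x))\bigr)_x$.
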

\begin{proof}

Let $(\varphi,\varpi)$ be a critical point of \eqref{eq: functional}. Taking $(\beta^1,\beta^2)\in C^1_c((0,T]\times\Rr)\times C([0,T])$, we have
\begin{equation}\label{eq-diff}
	\left. \frac{d}{d\varepsilon}\tilde{I}[(\varphi,\varpi) + \varepsilon (\beta^1,\beta^2)] \right|_{\varepsilon=0}=0.
\end{equation}
The previous identity implies that 
\begin{equation}\label{eq-LE}
	-\left( L_z(\varphi_t,\varphi_x)  - \varpi \right)_t - \left(L_y(\varphi_t,\varphi_x) \right)_x +V^\prime =0,
\end{equation}
and 
\begin{equation}\label{boundary-L}
	 L_z(\varphi_t(T,x),\varphi_x(T,x)) - \varpi(T) - u'_T(x)=0
\end{equation}
on $[0,T]\times \Rr$. Because $\varphi_x>0$, \eqref{L def} gives
\begin{equation}\label{L-dif-ident}
\begin{split}
& L_z(\varphi_t,\varphi_x)=F^\prime\left( \frac{\varphi_t}{\varphi_x}\right),
 \\
 & \left(  L_y(\varphi_t,\varphi_x)\right) _x=\left( -\frac{\varphi_t}{\varphi_x}F^\prime\left( \frac{\varphi_t}{\varphi_x}\right)+F\left( \frac{\varphi_t}{\varphi_x}\right)\right)_x= -\frac{\varphi_t}{\varphi_x}\left( F^\prime\left( \frac{\varphi_t}{\varphi_x}\right)\right)_x .
\end{split}
\end{equation} 
Notice that, by \eqref{eq: Legendre tr}, we have 
\begin{equation}\label{H-identity}
\left(H\left(F^{\prime} \left( \frac{\varphi_t }{\varphi_x}\right)\right)\right)_x=\frac{\varphi_t}{\varphi_x}\left( F^\prime\left( \frac{\varphi_t}{\varphi_x}\right)\right)_x. 
\end{equation}
Combining the identities in \eqref{L-dif-ident} with \eqref{H-identity} and using \eqref{eq-LE}, we deduce the first equation in \eqref{eq: Euler-Lagrange wrt potential}. Using the first identity of \eqref{L-dif-ident} in \eqref{boundary-L}, we obtain  \eqref{boundary-in-phi}.

Finally, taking $\beta^1\equiv 0$ in \eqref{eq-diff}, we obtain
\[
	\int_0^T\left( \int_{\Rr} -(\varphi_t + Q \varphi_x)\dx \right)  \beta^2~ \dt =0,
\]
where $\beta^2$ is arbitrary. Thus, the continuity of the map $t\mapsto \int_{\Rr} -(\varphi_t + Q \varphi_x)\dx$ implies the second equation in \eqref{eq: Euler-Lagrange wrt potential}. \qedhere
\end{proof}

In Section \ref{sec:Price as lagrangem}, we address the existence of the price $\varpi$ as a Lagrange multiplier associated with a minimizer of \eqref{functional-v}. 
%As discussed in Remark \ref{rem:No use of first formula}, the multiplier rule we obtain has the advantage of averaging the dependence on the state variable, which allows to approximate the Lagrange multiplier using any approximation of the minimizer.

\section{Assumptions}\label{sec:Assumption}
In this section, we state the assumptions to prove the existence of minimizers of the functional \eqref{functional-v}. This set of assumptions is similar to the ones introduced in \cite{gomes2018mean} to guarantee the existence and uniqueness of $(u,m,\varpi)$ solving \eqref{eq:MFG system} and \eqref{boundaryP}.

%\begin{hyp}\label{hyp: uT-m0-int-c} 
% The cost function, $u_T$, is differentiable and  {\color{red}$u^\prime_T\in L^{1}(\Rr)$.}
%\end{hyp}
%
%A key difference between the previous assumption and the corresponding one in \cite{gomes2018mean} is the $L^1$ integrability of $u'_T$. This is used here to obtain bounds on the minimizing sequences of our variational problem. 

The following two assumptions require standard growth and convexity properties for $H$. 

\begin{hyp}\label{hyp: grow-L} 
There exist constants, $c>0$ and $p>1$, such that  the Legendre-Fenchel transform of $H$, the function $F$ in \eqref{eq:Legendre transform},  satisfies
\begin{equation*}
	F(v)\geq c|v|^p.
\end{equation*}
\end{hyp}

\begin{hyp}\label{hyp: H convex}
For all $x\in\Rr$, the map $p\mapsto H(p)$ is 
	%	strictly  convex; that is, $H_{pp}(x,p)>0$ for all $(x,p)\in\Rr^2$ {\color{red} GomesTran}.} 
uniformly convex; that is, there exists a constant $\kappa >0$ such that $H''(p)\geq \kappa$ for all $p\in\Rr$. Moreover, there exists a positive constant, $C$, such that $|H^{\prime\prime\prime}|\leq C$.
\end{hyp}

For the supply, to simplify, we assume it is a smooth function of time.

\begin{hyp}%\label{hyp:Q C infty}
	The supply function, $Q$, is $C^\infty([0,T])$. 
	%	 {\color{red} see GomesSaude, Page 2, Problem 1 formulation} 
\end{hyp}

The following assumption is technical and was used in \cite{gomes2018mean} to get bounds for the price. 

\begin{hyp}\label{hyp: V-uT Lipschitz 2nd D bounded DS}
The potential $V$, the terminal cost $u_T$, the initial density function $m_0$ are $C^2(\Rr)$ functions and  $V$, $u_T$ are  globally Lipschitz. Furthermore,  there exists a constant $C>0$ such that
\[
	|V''|\leq C, \quad |u_T''|\leq C, \quad |m_0''|\leq C.
\]
\end{hyp}

The following condition guarantees the uniqueness of solutions of \eqref{eq:MFG system} and \eqref{boundaryP}.

\begin{hyp}\label{hyp: V-uT convex DS}
	The potential $V$ and the terminal cost $u_T$ are convex.
\end{hyp}

Finally, because we are interested in problems where agent's assets are bounded, we require the following assumption on $m_0$. This assumption further simplifies some technical points in the presentation.

\begin{hyp}\label{hyp: m_0-comp}
The initial density function $m_0$ has compact support; that is, there exists $R_0>0$ such that $\supp(m_0) \subset [-R_0,R_0]$. 
\end{hyp}

\section{The variational approach}\label{sec:Variational Approach compact}
Here, we examine a variational problem associated with the MFG system \eqref{eq:MFG system}-\eqref{boundaryP} continuing the formal derivation in Section \ref{sec:Derivation of the variaitonal problem}. This problem is obtained by minimizing the functional \eqref{functional-v} in a suitable class of admissible functions. We study the existence and uniqueness of solutions to this variational problem. In Section \ref{sec:Price as lagrangem}, we establish a formula representing the solution to the MFG system \eqref{eq:MFG system}-\eqref{boundaryP}, in terms of the solution to this variational problem. 

First, we recall that, under Assumptions \ref{hyp: grow-L}-\ref{hyp: V-uT convex DS}, Theorem 1 in \cite{gomes2018mean} gives existence and uniqueness of solutions $(u,m,\varpi)$ to Problem \ref{PMFG}, where $m\in \mathcal{P}(\Rr) \cap C([0,T]\times\Rr)$. Moreover, $u$ is  a viscosity solution to the first equation in \eqref{eq:MFG system}, Lipschitz continuous and semi-concave in $x$, and  $u_x$, $u_{xx}$, $m$ are bounded. Furthermore, by the results in \cite{gomes2021duality}, $\varpi$ is Lipschitz continuous.

%\begin{remark}
%{\color{blue} Let $b(t,x)=H'(\varpi(t)+u_x(t,x))$ denote the vector field of the continuity equation in \eqref{eq:MFG system}. By Proposition 8 in \cite{gomes2018mean}, $|u_{xx}|\leq C(T,V,u_T)$, which implies that $u_x$ is Lipschitz w.r.t. $x$. 
%By Assumptions \ref{hyp: grow-L} and \ref{hyp: H convex}, for $p>2$, $|H''|\leq C$, for some $C>0$. Thus, $b(t,\cdot)\in C^1(\Rr)$ is Lipschitz continuous in $\Rr$.}
%\end{remark}

\subsection{Preliminary results for the continuity equation}
Before we formulate our variational problem, we prove a general result for the continuity equation (the second equation in \eqref{eq:MFG system}) that motivates the choice of the function spaces. We recall the following result from \cite{CARAVENNA20161168} about the existence and uniqueness of solutions to the continuity equation. Let $\mu_0\in C^1(\Rr)$, $b\in L^1([0,T];W_{loc}^{1,1}(\Rr))\cap C([0,T]\times\Rr)$ and $b_x\in L^\infty([0,T]\times\Rr)$. Then, the continuity equation  
\begin{equation}\label{cont-eq}
\begin{cases}
\mu_t-(b\mu)_x=0 & [0,T]\times \Rr,
\\
\mu(0,x)=\mu_0(x) & x \in\Rr
\end{cases}
\end{equation}
has  a unique solution $\mu\in L^\infty([0,T]\times\Rr)$  in distributional sense. The existence result follows from Theorem 1.1 in \cite{CARAVENNA20161168}, which addresses the existence and uniqueness of distributional solutions to the continuity equation \eqref{cont-eq} for a vector field $b$ satisfying weaker conditions.

Now, we prove that if the initial condition $\mu_0$ of the continuity equation is compactly supported, the solution $\mu$ is also compactly supported.

\begin{pro}\label{cont-comp}
Let $\mu_0\in C^1(\Rr)$, $b\in L^1([0,T];W_{loc}^{1,1}(\Rr))\cap C([0,T]\times\Rr)$ and $b_x\in L^\infty([0,T]\times\Rr)$. Assume further that $\mu_0 \in C_c^1(\Rr)$. 
Then, the unique solution to the continuity equation \eqref{cont-eq} has compact support; that is,  $\mu \in L_c^\infty([0,T]\times\Rr)$.
\end{pro}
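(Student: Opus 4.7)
The plan is to reduce the compactness of $\supp \mu$ to the classical method of characteristics. The hypothesis $b_x \in L^\infty([0,T] \times \Rr)$ makes $b(t,\cdot)$ globally Lipschitz uniformly in $t$, and since $b \in C([0,T]\times\Rr)$, continuity of $t \mapsto b(t,0)$ on the compact interval $[0,T]$ gives $K := \sup_{t \in [0,T]} |b(t, 0)| < \infty$. Together these yield the linear growth bound $|b(t,x)| \leq K + \|b_x\|_\infty |x|$ on $[0,T] \times \Rr$.

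First, I would study the characteristic ODE
\[
\dot{X}(t, x_0) = -b(t, X(t, x_0)), \quad X(0, x_0) = x_0.
\]
Cauchy--Lipschitz together with the linear growth bound yields a unique global flow $X \in C([0,T] \times \Rr)$, and Gronwall's inequality gives $|X(t, x_0)| \leq (|x_0| + KT)\, e^{\|b_x\|_\infty T}$, so the flow carries $\supp \mu_0 \subset [-R_0, R_0]$ into a fixed compact set $[-R, R]$ uniformly for $t \in [0, T]$. The Jacobian $\partial_{x_0} X$ solves the linear ODE $\partial_t(\partial_{x_0} X) = -b_x(t, X)\, \partial_{x_0} X$, hence $e^{-\|b_x\|_\infty T} \leq \partial_{x_0} X(t, x_0) \leq e^{\|b_x\|_\infty T}$, and therefore $X(t, \cdot)$ is a bi-Lipschitz homeomorphism of $\Rr$.

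Next, I would define the candidate $\tilde\mu$ via the pushforward formula $\tilde\mu(t, X(t, x_0))\, \partial_{x_0} X(t, x_0) = \mu_0(x_0)$. The two-sided bounds on $\partial_{x_0} X$ together with $\mu_0 \in C_c^1(\Rr)$ give $\tilde\mu \in L^\infty([0,T] \times \Rr)$, and by construction $\supp \tilde\mu(t, \cdot) = X(t, \supp \mu_0) \subset [-R, R]$ uniformly in $t$. A short change of variables against a test function $\phi \in C_c^\infty([0,T) \times \Rr)$ shows that $\tilde\mu$ is a distributional solution of \eqref{cont-eq}. Invoking the uniqueness statement from \cite{CARAVENNA20161168} quoted just before the proposition, we conclude $\mu = \tilde\mu$, so $\mu \in L_c^\infty([0,T] \times \Rr)$.

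The main obstacle is the verification that the pushforward $\tilde\mu$ is indeed an $L^\infty$ distributional solution, but because $b$ is Lipschitz in $x$ this is classical and does not require DiPerna--Lions machinery: one simply differentiates in $t$ the identity $\int \phi(t, X(t, x_0))\, \mu_0(x_0)\, \d x_0$ for a test function $\phi$ and rewrites the result back in Eulerian variables. Care is needed only to track measurability in $t$ of $\tilde\mu$, which follows from continuity of the flow $X$ and of $\partial_{x_0} X$ in $(t, x_0)$.
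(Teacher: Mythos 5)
Your proof is correct in substance but takes a genuinely different route from the paper. The paper does not work with the flow of $b$ directly: it mollifies $b$ into smooth fields $b^\varepsilon$ with $\mathrm{Lip}(b^\varepsilon)\le\mathrm{Lip}(b)$ converging uniformly on compacts, solves the regularized continuity equation classically by characteristics, obtains uniform-in-$\varepsilon$ compact support and $L^\infty$ bounds via Gr\"onwall, extracts a weak-$*$ limit $\bar\mu$ in $L^\infty$, verifies that $\bar\mu$ is a distributional solution by passing to the limit in the weak formulation, and only then invokes the uniqueness of \cite{CARAVENNA20161168} to identify $\bar\mu$ with $\mu$. You instead build the candidate solution directly as the pushforward of $\mu_0$ along the Lipschitz flow of $-b$ and verify the weak formulation by differentiating $t\mapsto\int\phi(t,X(t,x_0))\mu_0(x_0)\,\d x_0$, which is legitimate since $b$ is continuous in $(t,x)$ and Lipschitz in $x$, so Picard--Lindel\"of applies and $t\mapsto X(t,x_0)$ is $C^1$. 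Your approach is more direct and avoids the compactness/limit step; the paper's regularization buys the luxury of never having to discuss differentiability of a merely Lipschitz flow. Both arguments hinge on the same uniqueness result, and both use the same Gr\"onwall confinement of the characteristics emanating from $\supp\mu_0$.

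One step of yours is stated too strongly: the claim that $\partial_{x_0}X$ solves the variational equation $\partial_t(\partial_{x_0}X)=-b_x(t,X)\,\partial_{x_0}X$ is not justified here, since $b$ is only Lipschitz in $x$, $b_x$ is a mere $L^\infty$ function defined up to null sets, and the composition $b_x(s,X(s,x_0))$ is not well defined pointwise along the (Lebesgue-null) characteristic curves. This is not a fatal gap, because the only consequence you use --- the two-sided bound $e^{-\|b_x\|_\infty T}\le|\partial_{x_0}X|\le e^{\|b_x\|_\infty T}$ a.e. --- follows directly from Gr\"onwall applied to $|X(t,x_0)-X(t,x_0')|$ in both directions, which gives that $X(t,\cdot)$ is bi-Lipschitz with these constants and hence (by Rademacher) that the a.e.-defined Jacobian obeys the same bounds. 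You should replace the variational-equation argument by this direct estimate; with that repair the pushforward density is bounded, compactly supported, and the rest of your argument goes through.
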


\begin{proof} 
From the results in \cite{CARAVENNA20161168}, it follows that there exists a unique, $\mu\in L^\infty([0,T]\times\Rr)$ solving \eqref{cont-eq} in the distributional sense.  
Let $b^\varepsilon$ be a sequence of functions in $C^\infty([0,T]\times \Rr)$ satisfying:
\begin{itemize}
	\item $b^\varepsilon$ is Lipschitz continuous w.r.t. $x$, and its Lipschitz constant satisfies $\mbox{Lip}(b^\varepsilon) \leq \mbox{Lip}(b)$,
	\item $b^\varepsilon \to b$ uniformly on every compact set of $[0,T]\times \Rr$.
\end{itemize}
We can obtain such sequence $b^\varepsilon$ by considering the convolution with standard mollifiers in $x$ and a partition of unity construction in $t$.
%$\theta_\varepsilon$ be a standard mollifier and 
%	\begin{equation}\label{def-b}
%b^\varepsilon(t,x)=\left( b(t,x)\ast\theta_\varepsilon(t)\right) \ast\theta_\varepsilon(x).
%	\end{equation}
Next, we consider the continuity equation with the vector field $b^\varepsilon$
	\begin{equation}\label{cont-eq-reg}
	\begin{cases}
		\mu^\varepsilon_t-(b^\varepsilon\mu^\varepsilon)_x=0 & [0,T]\times \Rr,
		\\
		\mu^\varepsilon(0,x)=\mu_0(x) & x\in\Rr.
	\end{cases}
\end{equation}
Because $b^\varepsilon,\, b_x^\varepsilon\in C^1([0,T]\times\Rr)$, by Theorem 6.3 in \cite{perthame}, \eqref{cont-eq-reg} has a unique solution $\mu^\varepsilon\in C^1([0,T]\times\Rr)$ given by
\begin{equation}\label{sol-smooth-rep}
\mu^\varepsilon(t,X^\varepsilon(t;y))J(t;y)=\mu_0(y), \quad (t,y)\in[0,T]\times\Rr,
\end{equation}
where 
\begin{equation}\label{def-J}
J(t;y)=\exp\left( \int_{0}^{t}b^\varepsilon_x(s,X^\varepsilon(s;y))~\ds \right),
\end{equation}
and $X^\varepsilon$ solves the following initial value problem
\begin{equation}\label{eq:Characteristics equation}
\begin{cases}
\dot{X}^\varepsilon(t;y)=b^\varepsilon(t,X^\varepsilon(t;y))&\quad (t,y)\in(0,T]\times \Rr,
\\
X^\varepsilon(0;y)=y &\quad y\in \Rr.
\end{cases}
\end{equation}
Because $b^\varepsilon\in C^1([0,T]\times\Rr)$, the map $y \mapsto X^\varepsilon(t;y)$ is a diffeomorphism (see \cite{Perko2006}, Chapter 3). Moreover, because $b^\varepsilon \in C^1([0,T]\times\Rr)$ is Lipschitz continuous w.r.t. $x$, we have $|b^\varepsilon(t,x)| \leq C_\varepsilon\left(1+|x|\right)$, where, by the uniform convergence of $b^\varepsilon$ to $b$ on compact sets,
\begin{equation}\label{eq:Gronwall bound}
	C_{\varepsilon}\leq \max\{1+|b(t,0)|,\mbox{Lip}(b)\}.
\end{equation}
Applying Gr\"{o}nwall's inequality to \eqref{eq:Characteristics equation}, provides
\begin{equation}\label{diffeom-ineq}
%e^{-k_\varepsilon t}|y_1-y_2|\leq |X^\varepsilon(t;y_1)-X^\varepsilon(t;y_2)|\leq e^{k_\varepsilon t}|y_1-y_2|,
|X^\varepsilon(t;y)|\leq \left( |y| + C_\varepsilon T\right)\left(1+C_\varepsilon Te^{C_\varepsilon T}\right).
\end{equation}
%\begin{equation*}%\label{diffeom-ineq}
%{\color{magenta} |X^\varepsilon(t;y_1)-X^\varepsilon(t;y_2)|\leq e^{k_\varepsilon t}|y_1-y_2|,}
%\end{equation*}
Because $|y|>R_0$ implies $\mu_0(y)=0$ for some $R_0>0$, \eqref{sol-smooth-rep} shows that $\mu^\varepsilon$ may have non-zero values only for those $y$ satisfying $|y|\leq R_0$, for which \eqref{diffeom-ineq} implies 
\[
	|X^\varepsilon(t;y)|\leq \left(R_0 + C_\varepsilon T\right)\left(1+C_\varepsilon Te^{C_\varepsilon T}\right)=:R_{\varepsilon}.
\]
Thus, $\mbox{supp}(\mu^\varepsilon) \subset [0,T]\times [-R_\varepsilon,R_\varepsilon]$, which, by \eqref{eq:Gronwall bound}, provides the existence of $R>0$, depending on $T$ and $\mbox{Lip}(b)$, such that $\supp\mu^\varepsilon\subset[-R,R]$ for every $1\gg \varepsilon>0$.
 Furthermore, \eqref{sol-smooth-rep} and \eqref{def-J} imply that there exists $C\geq0$, depending on $T$, $\mbox{Lip}(b)$ and $\mu_0$, such that 
$||\mu^\varepsilon||_{L^\infty([0,T]\times\Rr)}\leq C$ for all $1\gg \varepsilon>0$. 
Hence, by Banach-Alaoglu theorem, there exists $\bar{\mu}\in L^\infty([0,T]\times\Rr)$ such that
\begin{equation*}
\mu^\varepsilon\overset{\ast}{\rightharpoonup} \bar{\mu}  \quad \mbox{as} \quad  \varepsilon\to 0 \quad \text{in}\quad   L^\infty([0,T]\times \Rr).
\end{equation*}
Consequently,  $\supp\bar{\mu}\subset[-R,R]$ as well. On the other hand, $\mu^\varepsilon$ also solves \eqref{cont-eq-reg} in the sense of distributions; that is,
\begin{equation}\label{sol-weak-reg}
-\int_{0}^{T}\int_{\Rr}\mu^\varepsilon\left(\phi_t-b^\varepsilon\phi_x\right)\dx\dt=\int_{\Rr}\mu_0\phi~\dx, 
\end{equation}
for any $\phi\in C^1_c([0,T)\times\Rr)$.

\iffalse
Notice that, given $\phi\in C^1_c([0,T)\times\Rr)$, the functions $\mu^\varepsilon\left(\phi_t-b^\varepsilon\phi_x\right)$ and $\bar{\mu}\left(\phi_t-b\phi_x\right)$ are supported in $[0,T]\times[-R,R]$ and bounded uniformly in $\epsilon$. 
{\color{magenta} Therefore, for any $N\in \Nn$ there exist $t_N$ such that $0<t_N<\frac{1}{N}<T$ and
\begin{equation}\label{far-0}
\int_{0}^{t_N}\int_{\Rr}\left| \mu^\varepsilon\left(\phi_t-b^\varepsilon\phi_x\right)\right|+\left| \overline{\mu}\left(\phi_t-b\phi_x\right)\right|\dx\dt\leq \frac{1}{N}.
\end{equation}
Recalling that $b\in C([0,T]\times\Rr)$, we have that $b^\varepsilon\to b$ uniformly in any compact set $K\subset [0,T]\times\Rr$. 
Relaying on this and taking into account that $\phi\in C^1_c([0,T)\times\Rr$), $\supp\mu(t,\cdot)$, $\supp\mu^\varepsilon(t,\cdot)\subset[-R,R]$ for $t\in[0,T]$, and using \eqref{weak-limit}, \eqref{sol-weak-reg}, and \eqref{far-0}, we obtain  
\begin{equation*}
	-\int_{0}^{T}\int_{\Rr}\bar{\mu}\left(\phi_t-b\phi_x\right)\dx\dt=\int_{\Rr}\mu^0\phi\dx +O\left(\frac{1}{N}\right)
\end{equation*}
as $t_N \to 0$ ?. 
Because $\phi$ is arbitrary, we conclude that $\bar{\mu}$ is a solution to \eqref{cont-eq} in the distributional  sense.} 
\fi
Thus, given $\phi\in C^1_c([0,T)\times\Rr)$, we write
\begin{equation}\label{eq:Aux weak convergence}
\begin{split}
&-\int_0^T \int_{\Rr} \bar{\mu} \left(\phi_t - b\phi_x\right) \dx \dt 
\\
%& = \int_{\Rr} \mu_0 \phi  \dx + \int_0^T \int_{\Rr} \left(\mu^\epsilon - \bar{\mu}\right) \phi_t + b \phi_x \left( \bar{\mu}-\mu^\epsilon \right) + \mu^\epsilon \phi_x \left(b-b^\epsilon\right) \dx \dt
%\\
& = -\int_{0}^{T}\int_{\Rr}\mu^\varepsilon\left(\phi_t-b^\varepsilon\phi_x\right) + \left(\mu^\varepsilon - \bar{\mu}\right) \left(\phi_t - b \phi_x\right) +  \mu^\varepsilon \phi_x \left(b-b^\varepsilon\right) \dx \dt.
%& = \int_{\Rr} \mu_0 \phi  \dx + \int_0^T \int_{\Rr} \left(\mu^\varepsilon - \bar{\mu}\right) \left(\phi_t - b \phi_x\right) + \mu^\varepsilon \phi_x \left(b-b^\varepsilon\right) \dx \dt.
\end{split}
\end{equation}
Because $\phi_t$, $\phi_x$, $b\phi_x \in L^1([0,T]\times \Rr)$, the second term on the right-hand side of \eqref{eq:Aux weak convergence} vanishes as $\varepsilon \to 0$. 
 Furthermore, using the uniform bound for $\mu^\varepsilon$, and because $b^\varepsilon$ converges uniformly to $b$ in the compact support of $\phi$, we obtain that the third term on the right-hand side of \eqref{eq:Aux weak convergence} also vanishes as $\varepsilon \to 0$. 
Thus, using \eqref{sol-weak-reg}, we get
\[
-\int_{0}^{T}\int_{\Rr}\bar{\mu}\left(\phi_t-b\phi_x\right)\dx\dt=\int_{\Rr}\mu_0\phi~\dx, 
\]
and since $\phi$ is arbitrary, we conclude that $\bar{\mu}$ is a solution to \eqref{cont-eq} in the distributional  sense.

To conclude the proof, it is enough to recall that the results in \cite{CARAVENNA20161168} provide uniqueness for the initial value problem in \eqref{cont-eq}, in the sense of the distributions.
\end{proof}

Applying the previous result to the MFG system \eqref{eq:MFG system}-\eqref{boundaryP}, we obtain the following.

\begin{cor}\label{corollary-m-comp}
Suppose that Assumptions \ref{hyp: grow-L}-\ref{hyp: V-uT convex DS}  hold. Let $(u,m,\varpi)$ be the solution to \eqref{eq:MFG system}. 
Assume further that  Assumption \ref{hyp: m_0-comp} holds with $R_0>0$.
Then, $m$ is compactly supported; that is, there exists a constant $R_m\geq R_0$, such that $\supp m(t,\cdot)\subseteq[-R_m,R_m]$ for $t\in[0,T]$. Moreover, $R_m$ is bounded by a constant that depends only on the problem data.
\end{cor}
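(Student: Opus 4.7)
The plan is to apply Proposition \ref{cont-comp} directly to the continuity equation
\[
m_t - (bm)_x = 0, \quad b(t,x) := H'(\varpi(t) + u_x(t,x)),
\]
with initial datum $m(0,\cdot)=m_0$, which is exactly the second equation in \eqref{eq:MFG system} together with \eqref{boundaryP}. So the work reduces to verifying the hypotheses of Proposition \ref{cont-comp} for this particular vector field $b$, and then tracking how the resulting radius depends on the data.

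First, I would check that $m_0 \in C^1_c(\Rr)$: by Assumption \ref{hyp: V-uT Lipschitz 2nd D bounded DS}, $m_0 \in C^2(\Rr)$, and by Assumption \ref{hyp: m_0-comp}, $\supp m_0 \subset [-R_0,R_0]$. Next, I would invoke the regularity of the MFG solution recalled at the start of Section \ref{sec:Variational Approach compact}: under Assumptions \ref{hyp: grow-L}--\ref{hyp: V-uT convex DS}, $u_x$ and $u_{xx}$ are bounded on $[0,T]\times\Rr$ and $\varpi$ is Lipschitz on $[0,T]$, so in particular bounded. Combined with $H\in C^2(\Rr)$ and $|H''|$ bounded on bounded sets (which follows from $|H'''|\leq C$ in Assumption \ref{hyp: H convex} together with local boundedness of $H''$), this gives that $b$ is continuous on $[0,T]\times\Rr$ and that
\[
b_x(t,x) = H''(\varpi(t)+u_x(t,x))\, u_{xx}(t,x)
\]
is bounded in $L^\infty([0,T]\times\Rr)$ by a constant depending only on the a priori bounds for $u_x$, $u_{xx}$, $\varpi$ from \cite{gomes2018mean} and on $H$. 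In particular, $b \in L^1([0,T];W^{1,1}_{\mathrm{loc}}(\Rr))$. Since $m\in L^\infty([0,T]\times\Rr)$ solves \eqref{cont-eq} in the sense of distributions, the uniqueness part of the result from \cite{CARAVENNA20161168} identifies it with the distributional solution considered in Proposition \ref{cont-comp}.

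Proposition \ref{cont-comp} then yields an $R>0$ such that $\supp m(t,\cdot)\subset [-R,R]$ for all $t\in[0,T]$; setting $R_m := \max\{R,R_0\}$ gives the claim. The proof of Proposition \ref{cont-comp} shows that $R$ depends only on $R_0$, $T$, and $\operatorname{Lip}(b) = \|b_x\|_{L^\infty}$, so by the previous paragraph $R_m$ is controlled by quantities depending only on the problem data. The main obstacle is really just the bookkeeping of the bound on $\operatorname{Lip}(b)$; everything else follows by directly quoting Proposition \ref{cont-comp} and the a priori regularity recalled from \cite{gomes2018mean,gomes2021duality}.
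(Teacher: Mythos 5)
Your proposal is correct and follows essentially the same route as the paper: both reduce the claim to Proposition \ref{cont-comp} applied to the vector field $b = H'(\varpi + u_x)$, using the a priori bounds on $u_x$, $u_{xx}$, and $\varpi$ from \cite{gomes2018mean} to control $\mathrm{Lip}(b)$ and hence the radius $R_m$ in terms of the data. Your verification that $H''$ is bounded on the (bounded) range of $\varpi+u_x$ via $|H'''|\leq C$ is a slightly more careful variant of the paper's appeal to Assumptions \ref{hyp: grow-L}--\ref{hyp: H convex}, but it is not a different argument.
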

\begin{proof}
Let $b(t,x)=H'(\varpi(t)+u_x(t,x))$ denote the vector field of the continuity equation in \eqref{eq:MFG system}. By Proposition 8 in \cite{gomes2018mean}, $|u_{xx}|\leq C(T,V,u_T)$, which implies that $u_x$ is Lipschitz w.r.t. $x$. 
By Assumptions \ref{hyp: grow-L} and \ref{hyp: H convex}, for $p>2$, $|H''|\leq C(F)$, for some $C(F)>0$. Thus, $b(t,\cdot)\in C^1(\Rr)$ is Lipschitz continuous in $\Rr$ uniformly with respect to $t$. Furthermore, the Lipchitz constant satisfies 
\[
	\mbox{Lip}(b(t,\cdot)) \leq C_0,
\]
where $C_0=C_0(T,V,F,u_T)$. Therefore, Proposition \ref{cont-comp} implies the first part of the result. Moreover, \eqref{diffeom-ineq} shows that 
\begin{equation}\label{eq:Compact support bound depending on data}
R_m \leq \left(R_0+C_0 T\right)\left(1+C_0 T e^{C_0 T}\right),
\end{equation}
which concludes the proof.
\end{proof}

\begin{remark}\label{comp-rem}
Consider the potential $\varphi$ associated with the solution $(u,m,\varpi)$ of the MFG system \eqref{eq:MFG system}-\eqref{boundaryP}, as given by \eqref{eq: potential relations}. By Corollary \ref{corollary-m-comp}, we deduce that the gradient of the potential $\varphi$ has compact support; that is, $\supp(\varphi_t(t,\cdot)),\supp(\varphi_x(t,\cdot))\subseteq[-R_m,R_m]$ for all $t\in[0,T]$. Thus, \eqref{eq:Compact support bound depending on data} shows that, by selecting
\[
	R > \left(R_0+C_0 T\right)\left(1+C_0 T e^{C_0 T}\right),
\]
we obtain a compact set $[-R,R]$ that depends only on problem data, and which contains the support of the gradient of $\varphi$ when \eqref{eq:Potential in terms of MFGs} holds. Thus, using this compact set, we can formulate our variational problem independently of the solution $(u,m,\varpi)$ of the MFGs system \eqref{eq:MFG system} and \eqref{boundaryP}. Notice that \eqref{eq:Potential in terms of MFGs} already suggests a candidate for a minimizer. However, we study the existence of solutions to the variational problem independently of solutions to the MFGs system. Moreover, if uniqueness holds and we have existence for both problems, then \eqref{eq:Potential in terms of MFGs} is the unique minimizer.

%Therefore, the functional in \eqref{functional-v}, becomes
%\begin{equation*}%\label{var-p'}
%	\begin{split}
%		I[\varphi]=\int_0^T \int_{-R}^{R} L(\varphi_t,\varphi_x)-V\varphi_x \dx\dt  - \int_{-R}^{R} \left(u_T(x)\right)_x \varphi(T,x) \dx,
%	\end{split}
%\end{equation*}
%and the constraint is $\int_{-R}^{R} -(\varphi_t + Q \varphi_x)dx=0$. 
%We denote by 
%$\Omega_R=[0,T]\times[-R,R]$. 
%Taking into that {\color{blue}$\supp(\varphi(t,\cdot))\subseteq[-R,R]$ for all $t\in[0,T]$}, we rewrite \eqref{var-p'}
%\begin{equation}\label{var-p}
%	\begin{split}
%		I[\varphi]=\int_0^T \int_{-R}^{R} L(\varphi_t,\varphi_x)+V\varphi \dx\dt  - \int_{-R}^{R} \left(u_T(x)\right)_x \varphi(T,x) \dx.
%	\end{split}
%\end{equation}
\end{remark}

\subsection{Statement of the variational problem}
In this subsection, we present our variational approach rigorously using only problem data. We start with the notations and the definition of admissible functions. Then, we formulate and study the variational problem.

%{\color{blue} By Corollary \ref{corollary-m-comp}, if we select
%\[
%	R>\max\left\{ \left(R_0+C(T,V,F,u_T) T\right)\left(1+C(T,V,F,u_T) T e^{C(T,V,F,u_T) T}\right) , R_0+\|Q\|_{L^1([0,T])}\right\},
%\]
%the function
%\[
%	\varphi(t,x)=M_0(x-q(t)),
%\]
%where $q(t)=\int_{0}^{t}Q(\tau)~d\tau$, belongs to the sets $\mathcal{B}(A), \mathcal{A}(\Omega_R),$ and $ \mathcal{K}(\Omega_R)$.
%}
Let $R_0$ be given by Assumption \ref{hyp: m_0-comp} and let 
\begin{equation}\label{eq:R selection}
R>\max\left\{ \left(R_0+C_0 T\right)\left(1+C_0 T e^{C_0 T}\right) , R_0+\|Q\|_{L^1([0,T])}\right\}.
\end{equation}
Notice that, by \eqref{eq:Compact support bound depending on data}, $R$ is an upper bound for $R_m$, as required, according to Remark \ref{comp-rem}. The additional requirement  $R > R_0+\|Q\|_{L^1([0,T])}$ guarantees that the set of admissible functions that we define below is not empty. Set
\[
	\Omega_R=[0,T]\times[-R,R], \quad \Omega=[0,T]\times\Rr.
\] 
We denote by $\mathcal{M}(\Omega_R)$ ($\mathcal{M}(\Omega)$)  the set of Radon measures on $\Omega_R\subset\Rr^2$ ($\Omega\subset\Rr^2$) and by $BV(\Omega_R)$ ($BV(\Omega)$) the set of functions with bounded variation on $\Omega_R$ ($\Omega$) (see \cite{evansgariepy2015}, \cite{AFP2000}).
%\begin{equation*}
%\mathcal{A}_0(\Omega_R)=\{ \varphi\in L^\infty (\Omega_R)\cap BV(\Omega_R)\},
%\end{equation*}
%
%Using this proposition from the first equation in \eqref{eq: potential relations}, it  follows that  $\supp(\varphi_x)=[-R,R]$ and from  the second equation in \eqref{eq: potential relations}, we deduce that $\supp(\varphi_t)=[-R,R]$. Hence, $\varphi$ has a compact support; that is, $\supp(\varphi)=[-R,R]$. 

To  define the admissible set for our variational problem, we rewrite the balance condition, the second equation in \eqref{eq: Euler-Lagrange wrt potential}. Recall that $\supp(m_0) \subset [-R_0,R_0]$ and $R> R_0$.
Let 
\begin{equation}\label{def-M0}
	M_0(x) = \int_{-\infty}^x m_0(y) \dy = \int_{-R}^x m_0(y) \dy, \quad x \in \Rr,
\end{equation}
be the cumulative density function of $m_0$. Note that after integrating the balance condition over $[0,t]$, and requiring that $\int_{\Rr} \varphi_x (t,x) \dx = 1$ for $t \in [0,T]$ (which follows in case that \eqref{eq: potential relations} holds), we get 
\begin{equation*}
	\int_{0}^{t}\int_{\Rr}\varphi_t~\dx \ds=-\int_{0}^{t}Q(s) \ds, \quad t \in [0,T].
\end{equation*}
Therefore, we  write the balance condition as 
\begin{equation}\label{for-Poincare}
	\int_{\Rr} \varphi(t,x)-M_0(x)~\dx=-\int_{0}^{t}Q(s) \ds, \quad t \in [0,T].
\end{equation}
Relying on \eqref{for-Poincare} and taking into account the discussion in Remark \ref{comp-rem}, for any set $A\subset \Rr^2$ satisfying $[0,T]\times[-R,R]\subseteq A$,  we denote  
\begin{align*}
\mathcal{B}_R(A)=& \left\{ (\varphi -M_0) \in W^{1,1}(A):~ \supp(\varphi_t(t,\cdot)),\supp(\varphi_x(t,\cdot))\subseteq (-R,R),~ t\in [0,T]\right\},
\\
\mathcal{B}(A)= & \left\{ \varphi \in \mathcal{B}_R(A):~\varphi_x\geq 0,~\varphi(0,x)=\int_{-R}^x m_0(y)\dy, ~  x \in \Rr \right.
	\\
	& \quad \left. \int_{\Rr} \varphi(t,x)-M_0(x)~\dx=-\int_{0}^{t}Q(s) ~\ds, ~ \int_{-R}^{R} \varphi_x(t,x)\dx=1, ~  t \in [0,T]	\right\},
\end{align*} 
which are convex sets. Before proceeding, we prove a crucial property of the set $\mathcal{B}(\Omega)$.

\begin{proposition}\label{pro-phi-0-1}
For any function $\varphi\in\mathcal{B}(\Omega)$, we have, for $t\in[0,T]$,
	\begin{equation*}
\varphi(t,x)=\begin{cases}
	0\quad & x\in(-\infty,-R],
	\\
\varphi(t,x)\quad & x\in(-R,R),
	\\
1 \quad & x\in[R,+\infty).
\end{cases}
	\end{equation*}
\end{proposition}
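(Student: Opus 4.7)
The plan is to exploit the admissibility constraints defining $\mathcal{B}(\Omega)$ to pin down the values of $\varphi(t,\cdot)$ on the two unbounded half-lines. Three short steps should suffice: reduce to showing $\varphi(t,\cdot)$ is constant there, relate those constants via the mass normalization, and then use the balance identity to fix them to $0$ and $1$.

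First, for every $t\in[0,T]$, the support condition inherited from $\mathcal{B}_R(\Omega)$ gives $\varphi_x(t,\cdot)\equiv 0$ on $\Rr\setminus(-R,R)$, so $x\mapsto\varphi(t,x)$ is constant on each of $(-\infty,-R]$ and $[R,+\infty)$; denote these constants by $a(t)$ and $b(t)$. The normalization $\int_{-R}^{R}\varphi_x(t,x)\dx=1$ appearing in the definition of $\mathcal{B}(\Omega)$ then yields $b(t)-a(t)=1$ via the one-dimensional fundamental theorem of calculus.

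Second, I would invoke the balance identity $\int_{\Rr}(\varphi(t,x)-M_0(x))\dx=-\int_{0}^{t}Q(s)\ds$. By Assumption \ref{hyp: m_0-comp}, $\supp(m_0)\subseteq[-R_0,R_0]\subset(-R,R)$, so the primitive $M_0$ from \eqref{def-M0} equals $0$ on $(-\infty,-R]$ and $1$ on $[R,+\infty)$. On these half-lines, $\varphi(t,x)-M_0(x)$ reduces to the constants $a(t)$ and $b(t)-1$, respectively. Since the right-hand side of the balance identity is finite, each unbounded-tail contribution must vanish (either interpreting the integral in the Lebesgue sense, which forces each tail integrand to be zero; or using symmetric truncation to obtain $a(t)+b(t)-1=0$, which combined with $b(t)-a(t)=1$ again gives $a(t)=0$ and $b(t)=1$). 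This is the stated trichotomy.

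The main obstacle I anticipate is a minor bookkeeping one: a priori, elements of $\mathcal{B}(\Omega)$ are defined only up to null sets, while the conclusion is stated for every $t\in[0,T]$. This is harmless because the definition of $\mathcal{B}_R(\Omega)$ already imposes the support condition for every $t\in[0,T]$, fixing a canonical representative; with that representative, $a(t)$ and $b(t)$ are well defined for every $t$, the balance identity holds pointwise in $t$, and the conclusion propagates to all $t\in[0,T]$.
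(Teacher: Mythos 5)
Your proof is correct and follows essentially the same route as the paper: both arguments use the support condition on $\varphi_x(t,\cdot)$ to conclude that $\varphi(t,\cdot)$ is constant on each half-line outside $(-R,R)$, and then use the integrability of $\varphi-M_0$ (together with $M_0\equiv 0$ on $(-\infty,-R]$ and $M_0\equiv 1$ on $[R,+\infty)$) to force those constants to be $0$ and $1$. The only cosmetic difference is that the paper extracts a sequence $x_k\to\infty$ along which $\varphi(t,x_k)\to 1$, whereas you argue directly that a nonzero constant on an unbounded interval cannot be integrable; your extra remarks on the representative and on $b(t)-a(t)=1$ are harmless additions.
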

\begin{proof}
Because $(\varphi -M_0) \in W^{1,1}(\Omega)$ and $\lim\limits_{x\to+\infty}M_0(x)=1$, for each $t \in [0,T]$, there exists a sequence ${x_k}$ such that $x_k\to\infty$ and  $\lim\limits_{k\to+\infty}\varphi (t, x_k)=1$. On the other hand, recalling that  
\[
	\supp(\varphi_t(t,\cdot)),\supp(\varphi_x(t,\cdot))\subset(-R,R),
\]
we have that $\varphi$ is constant on $\Omega\setminus [0,T]\times(-R,R)$. Consequently, $\varphi(t,x)=1$ for $x\in[R,+\infty)$. Similarly, we can prove that $\varphi(t,x)=0$, $x\in(-\infty,-R]$.
\end{proof}

%\begin{remark}\label{rem-seq-omeg-c} Let $\{\varphi^n\}\subset\mathcal{A}(\Omega)$. By Proposition \ref{pro-phi-0-1}, we have that the functions, $\varphi^n$, coincide on $\Omega\setminus([0,T]\times[-R,R])$. Therefore,  to examine the convergence of the sequence $\{\varphi^n\}$, it is enough to consider the sequence only on $\Omega_R$. Hence, instead of function set $\mathcal{A}(\Omega)$, we consider
%	\begin{equation}\label{admissible-A}
%		\begin{split}
%			\mathcal{A}(\Omega_R)=\left\{ \varphi \in \mathcal{A}_0(\Omega_R):~\right. &\left. \varphi_x\geq 0,~\int_{-R}^{R} \varphi(t,x)-M_0(x)~\dx=-\int_{0}^{t}Q(s) ~ds\right. \\&\left. 
%			~\varphi(0,x)=\int_{-R}^x m_0(y)dy, ~ \int_{-R}^{R} \varphi_x(\cdot,x)\dx=1
%			%, \text{  a.e. in }  [0,T]
%			\right\}.
%		\end{split}
%	\end{equation} 
%\end{remark}
Finally, the set of admissible functions for our variational problem is given by 
\begin{equation}\label{def:admissible-A}
			\begin{split}
				\mathcal{A}(\Omega_R)=\left\{ \varphi \in \mathcal{B}(\Omega_R):
				~\varphi(t,-R)=0
				%, \text{  a.e. in }  [0,T]
				\right\}.
			\end{split}
\end{equation} 
As a result of Proposition \ref{pro-phi-0-1}, we obtain the following relation between the admissible set $\mathcal{A}(\Omega_R)$ and the set $\mathcal{B}(\Omega)$. 
\begin{corollary}\label{cor-AU-to-AO} For any function $\varphi\in \mathcal{B}(\Omega)$ there exist a function $\tilde{\varphi}\in\mathcal{A}(\Omega_R)$ such that 
$\varphi\equiv\tilde{\varphi}$ in $\Omega_R$. The opposite is also true. 
\end{corollary}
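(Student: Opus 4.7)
The plan is to construct the correspondence in both directions by restriction and by extension-by-constants, and then verify that every defining property of the respective admissible sets is preserved. The key ingredient for the forward direction is Proposition \ref{pro-phi-0-1}, which pins down the values of any $\varphi \in \mathcal{B}(\Omega)$ outside $[-R,R]$; the key ingredient for the backward direction is the fact that the supports of $\tilde{\varphi}_t(t,\cdot)$ and $\tilde{\varphi}_x(t,\cdot)$ lie strictly inside $(-R,R)$, which will ensure that the extension glues across $x=\pm R$ without generating boundary distributions.

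For the forward implication, given $\varphi\in\mathcal{B}(\Omega)$, I set $\tilde{\varphi}:=\varphi|_{\Omega_R}$. Since $(\varphi-M_0)\in W^{1,1}(\Omega)$, its restriction is in $W^{1,1}(\Omega_R)$, and $\tilde{\varphi}_x\geq 0$, the support conditions, the initial condition $\tilde{\varphi}(0,x)=\int_{-R}^x m_0(y)\dy$, and the normalization $\int_{-R}^R\tilde{\varphi}_x(t,x)\dx=1$ all transfer verbatim. Proposition \ref{pro-phi-0-1} yields $\tilde{\varphi}(t,-R)=0$, which is the extra condition in \eqref{def:admissible-A}. For the integral balance, Proposition \ref{pro-phi-0-1} combined with the fact that $\supp(m_0)\subset[-R_0,R_0]\subset(-R,R)$ gives $\varphi-M_0\equiv 0$ on $\Omega\setminus\Omega_R$, so
\[
\int_{-R}^{R}\bigl(\tilde{\varphi}(t,x)-M_0(x)\bigr)\dx=\int_{\Rr}\bigl(\varphi(t,x)-M_0(x)\bigr)\dx=-\int_0^t Q(s)\ds,
\]
and hence $\tilde{\varphi}\in\mathcal{A}(\Omega_R)$.

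For the reverse implication, given $\tilde{\varphi}\in\mathcal{A}(\Omega_R)$, I extend to $\Omega$ by
\[
\varphi(t,x)=\begin{cases}0, & x\leq -R,\\ \tilde{\varphi}(t,x), & x\in(-R,R),\\ 1, & x\geq R.\end{cases}
\]
The value $1$ at $x=R$ is consistent because $\tilde{\varphi}(t,R)=\tilde{\varphi}(t,-R)+\int_{-R}^{R}\tilde{\varphi}_x(t,x)\dx=0+1=1$. To check $(\varphi-M_0)\in W^{1,1}(\Omega)$, I note that $M_0\equiv 0$ on $(-\infty,-R]$ and $M_0\equiv 1$ on $[R,\infty)$ (using $\supp(m_0)\subset[-R_0,R_0]\subset(-R,R)$), so $\varphi-M_0$ vanishes outside $\Omega_R$ and equals $\tilde{\varphi}-M_0$ on $\Omega_R$. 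The candidates for the weak derivatives of $\varphi-M_0$ on $\Omega$ are the extensions by zero of $\tilde{\varphi}_t$ and $\tilde{\varphi}_x-m_0$; the gluing across $x=\pm R$ produces no singular boundary term precisely because $\supp(\tilde{\varphi}_t(t,\cdot)),\supp(\tilde{\varphi}_x(t,\cdot))\subset(-R,R)$, which implies that $\tilde{\varphi}(t,\cdot)$ is constant on a neighborhood (possibly $t$-dependent) of $\pm R$ and therefore integration by parts against any test function in $C_c^\infty(\Omega)$ matches. The remaining properties—sign of $\varphi_x$, initial condition, support conditions, integral balance, and mass normalization—are immediate.

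The only delicate point, and the place where I would spend the most care, is the $W^{1,1}$ check across the interfaces $\{x=\pm R\}$ in the reverse direction, since the supports of $\tilde{\varphi}_t,\tilde{\varphi}_x$ are contained in $(-R,R)$ only slicewise in $t$ and not necessarily uniformly. The cleanest way to dispatch this is a direct distributional verification: for $\psi\in C_c^\infty(\Omega)$,
\[
\int_\Omega(\varphi-M_0)\,\psi_x\dx\dt=\int_{\Omega_R}(\tilde{\varphi}-M_0)\,\psi_x\dx\dt+\text{boundary terms at }x=\pm R,
\]
and the boundary terms vanish because $\tilde{\varphi}-M_0$ extends continuously by $0$ at $x=-R$ and by $1-1=0$ at $x=R$. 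The analogous argument for $\psi_t$ is easier because the extended function is constant in $t$ on $\Omega\setminus\Omega_R$.
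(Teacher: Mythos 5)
Your argument is correct and follows exactly the route the paper intends: the corollary is stated as an immediate consequence of Proposition \ref{pro-phi-0-1}, with restriction to $\Omega_R$ in one direction and extension by the constants $0$ and $1$ in the other, which is precisely what you do. Your additional care with the $W^{1,1}$ gluing across $\{x=\pm R\}$ (via the vanishing traces of $\tilde{\varphi}-M_0$) supplies details the paper leaves implicit, and is sound.
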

Under Assumption \ref{hyp: V-uT Lipschitz 2nd D bounded DS}, we have
\begin{equation}\label{eq-need-for-pro-4.7}
\left| \int_{\Omega_R}u^\prime_T(x) \varphi_t ~\dx\dt\right| \leq \mbox{Lip}{(u_T)}\int_{\Omega_R}|\varphi_t|  ~\dx\dt,
\end{equation}
%To complete the setting of the variational problem, we impose the balance condition, the third equation in \eqref{eq:MFG system}. 
where $\mbox{Lip}{(u_T)}$ is the Lipschitz constant of $u_T$. Relying on the previous inequality, we consider the following variational problem 
\begin{equation*}
\begin{split}
\inf_{\varphi\in\mathcal{B}(\Omega)} \int_{\Omega_R} L(\varphi_t,\varphi_x) -V\varphi_x-u^\prime_T(x) \varphi_t~\dx\dt,
\end{split}
\end{equation*}
which, by Corollary \ref{cor-AU-to-AO}, coincides with the following (see \eqref{functional-v})
\begin{equation}\label{var-problem-c}
		\inf_{\varphi\in\mathcal{A}(\Omega_R)} I[\varphi],
\end{equation}
where
\begin{equation*}
	I[\varphi] :=\int_{\Omega_R} L(\varphi_t,\varphi_x) -V\varphi_x -u^\prime_T(x) \varphi_t~\dx\dt.
\end{equation*}
As anticipated in Remark \ref{comp-rem}, \eqref{eq:R selection} guarantees that the previous variational problem does not rely on the solution $(u,m,\varpi)$ to \eqref{eq:MFG system}-\eqref{boundaryP} but only on the data of Problem \ref{PMFG}.
%which is equivalent to 
%\begin{equation}
%\min_{\varphi\in\mathcal{A}} I[\varphi] :=\min_{\varphi\in\mathcal{A}}\int_0^T \int_{\Rr} L(\varphi_t,\varphi_x) +V\varphi~\dx  \dt -\int_0^T \int_{\Rr} \left(u_T(x)\right)_x \varphi_t(t,x) ~\dx\dt.
%\end{equation}
Moreover, the infimum in \eqref{var-problem-c} can be attained by at most one function, as we show next.
	
\begin{pro}\label{pro-unique}  
Suppose that Assumptions \ref{hyp: grow-L}-\ref{hyp: m_0-comp}  hold. Then, at most, one function attains the infimum in \eqref{var-problem-c}.
\end{pro}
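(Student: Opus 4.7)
The plan is to exploit convexity: the set $\mathcal{A}(\Omega_R)$ is convex, the functional $I$ is convex, and the perspective-function structure of $L$ provides strict convexity transverse to rays from the origin.

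First I would verify that $\mathcal{A}(\Omega_R)$ is convex by inspection of its defining constraints ($\varphi_x \geq 0$, the pointwise conditions $\varphi(0,\cdot) = M_0$ and $\varphi(t,-R) = 0$, the normalization $\int_{-R}^R \varphi_x\,\dx = 1$, and the integral balance condition \eqref{for-Poincare}), each of which is affine or convex. I would then check that $I$ is convex: the terms $-V\varphi_x$ and $-u_T'(x)\varphi_t$ are affine, and $L(z,y) = yF(z/y)$ is jointly convex on $\Rr \times \Rr^+$ as the perspective function of $F$. Since $H$ is uniformly convex by Assumption \ref{hyp: H convex}, its Legendre transform $F$ is strictly convex, and a direct computation yields, for $y^1, y^2 > 0$ and $\theta \in (0,1)$,
\begin{equation*}
L((1-\theta)(z^1, y^1) + \theta(z^2, y^2)) \leq (1-\theta) L(z^1, y^1) + \theta L(z^2, y^2),
\end{equation*}
with equality if and only if $z^1/y^1 = z^2/y^2$.

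Supposing that $\varphi^1, \varphi^2 \in \mathcal{A}(\Omega_R)$ are both minimizers, I would set $\varphi = \tfrac12(\varphi^1 + \varphi^2)$, observe that $\varphi \in \mathcal{A}(\Omega_R)$ by convexity of the admissible set, and note that
\begin{equation*}
\min I \leq I[\varphi] \leq \tfrac12 I[\varphi^1] + \tfrac12 I[\varphi^2] = \min I
\end{equation*}
forces $I[\varphi] = \min I$. The convexity inequality for $L$ must therefore be tight a.e.\ on $\Omega_R$, which, combined with finiteness of $I[\varphi^i]$ (so that $\varphi^i_t = 0$ wherever $\varphi^i_x = 0$), yields the pointwise identity $\varphi^1_t(t,x)\varphi^2_x(t,x) = \varphi^2_t(t,x)\varphi^1_x(t,x)$ for a.e.\ $(t,x)$.

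The final step is to upgrade this pointwise relation to $\varphi^1 \equiv \varphi^2$. The natural approach is to regard the common ratio $b := \varphi^i_t/\varphi^i_x$ as a drift, so that each $m^i := \varphi^i_x$ is a distributional solution of the continuity equation $m_t = (bm)_x$ with the shared initial datum $m^i(0,\cdot) = m_0$; the uniqueness result in \cite{CARAVENNA20161168} (used in Proposition \ref{cont-comp}) would then force $\varphi^1_x = \varphi^2_x$, and the boundary condition $\varphi^i(t, -R) = 0$ upgrades this to $\varphi^1 = \varphi^2$. Equivalently, setting $\psi = \varphi^1 - \varphi^2$ the pointwise identity becomes the transport-type equation $\psi_t \varphi^2_x - \varphi^2_t \psi_x = 0$, which, together with $\psi(0,\cdot) = 0$, $\psi(t,-R) = 0$, and $\int \psi_x(t,\cdot)\,\dx = 0$, would be integrated along characteristics. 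The main obstacle is precisely this last step: the common drift $b$ may fail to have the $W^{1,1}_{\mathrm{loc}}$ regularity required by standard continuity-equation uniqueness, and closing the gap will likely rely either on the a priori regularity of candidate minimizers (through bounds analogous to those on $u_{xx}$ and on $\varpi$ in \cite{gomes2018mean}) or on the convexity of $V$ and $u_T$ given by Assumption \ref{hyp: V-uT convex DS}, via a Lasry-Lions–type monotonicity argument of the sort used for the MFG uniqueness proof in \cite{gomes2018mean}.
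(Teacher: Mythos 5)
Your setup is sound and, up to a point, more careful than the paper's own argument: the convexity of $\mathcal{A}(\Omega_R)$ and of $I$, the identification of $L$ as the perspective function of $F$, and in particular the observation that equality in the convexity inequality for $L$ forces only $z^1/y^1=z^2/y^2$ (not $(z^1,y^1)=(z^2,y^2)$) are all correct. The perspective function is positively homogeneous of degree one, hence affine along rays through the origin and never jointly strictly convex, so the most that midpoint-equality can yield is the cross-product identity $\varphi^1_t\varphi^2_x=\varphi^2_t\varphi^1_x$ a.e., exactly as you derive. The paper's proof instead forms the auxiliary midpoints $\bar\varphi=\tfrac12(\varphi^1+\varphi^2)$ and $\tilde\varphi=\tfrac12(\varphi^1+\bar\varphi)$, asserts that $L$ is strictly convex on $\Rr\times\Rr^+$, and concludes directly that $\varphi^1_t=\varphi^2_t$ and $\varphi^1_x=\varphi^2_x$ a.e.\ on $\mathcal{U}_1=\{\varphi^1_x>0\}$ (and symmetrically on $\mathcal{U}_2$), finishing by using $\varphi^i_t=0$ where $\varphi^i_x=0$. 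That route, if one grants the strict convexity claim, closes the proof in a few lines; your analysis shows why that claim needs justification beyond the strict convexity of $F$.

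The genuine gap is therefore your final step, which you acknowledge is not closed: passing from the a.e.\ identity $\varphi^1_t\varphi^2_x=\varphi^2_t\varphi^1_x$ (equivalently, equality of the ratios $b=\varphi^i_t/\varphi^i_x$ where both densities are positive) to $\varphi^1\equiv\varphi^2$. The constraints in $\mathcal{A}(\Omega_R)$ rule out a global constant dilation of $\varphi_x$, but not a spatially varying proportionality factor, so some transport-type uniqueness is indispensable. The continuity-equation route you sketch founders exactly where you say it does: for a general pair of minimizers in $W^{1,1}$ the common drift $b$ need not lie in $L^1_t W^{1,1}_{x,\mathrm{loc}}$ with bounded divergence, so the uniqueness theorem of \cite{CARAVENNA20161168} invoked in Proposition \ref{cont-comp} does not apply, and the alternative Lasry--Lions monotonicity argument is only gestured at, not carried out. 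As submitted, the proposal establishes that any two minimizers share the same velocity ratio on the common positivity set but does not prove they coincide; it is an incomplete proof of the statement, and the missing step is precisely the one the paper disposes of via its strict-convexity assertion.
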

\begin{proof}
Let $\varphi^1$ and $\varphi^2$ attain the infimum in \eqref{var-problem-c}.
By Proposition \ref{pro-bound-L1}, we denote 
\begin{equation*}
\begin{aligned}
\ell=\min\limits_{(\varphi)\in\mathcal{A}(\Omega_R)}I[\varphi]\in\Rr.
\end{aligned}
\end{equation*}
Thus, $I[\varphi^1]=I[\varphi^2]=\mathcal{\ell}$. Setting $\bar{\varphi}=\frac{1}{2}(\varphi^1+\varphi^2)$, and using  the convexity of $L$, we obtain
\begin{equation}\label{eq:mincxty0}
\ell\leq I[\bar{\varphi}]\leq\tfrac{1}{2}I[\varphi^1]+\tfrac{1}{2}I[\varphi^2]=\ell.
\end{equation}
Hence, $\bar{\varphi}$ is also minimizer of \eqref{var-problem-c}.
Let $\tilde{\varphi}=\frac{\varphi^1+\bar{\varphi}}{2}$ and
\begin{equation*}
%\label{def-Us}
\begin{aligned}
\Uu_1=\{(t,x)\in \Omega_R:{\varphi}_x^1>0 \},\quad \Uu_2=\{(t,x)\in \Omega_R:{\varphi}_x^2>0 \},
\\
\tilde{\Uu}=\{(t,x)\in \Omega_R:\tilde{\varphi}_x> 0 \}= \bar{\Uu}= \{(t,x)\in \Omega_R:\bar{\varphi}_x> 0 \}=\Uu_1\cup\Uu_2.
\end{aligned}
\end{equation*}
Arguing as in \eqref{eq:mincxty0}, we have
\begin{equation}\label{eq:mincxty}
\ell\leq I[\tilde{\varphi}]\leq\tfrac{1}{2}I[\varphi^1]+\tfrac{1}{2}I[\bar{\varphi}]=\ell.
\end{equation}
This with \eqref{eq:mincxty0}, yields that
\begin{equation*}
\begin{aligned}
L(\varphi^1_t,\varphi^1_x),\,L(\varphi^2_t,\varphi^2_x),\,L(\bar{\varphi}_t,\bar{\varphi}_x), \, L(\tilde{\varphi}_t,\tilde{\varphi}_x) <+\infty\enspace \hbox{a.e.~in } \Omega_R.
\end{aligned}
\end{equation*}
Therefore,  
\begin{equation}
\label{eq:defLL0}
\begin{aligned}
&\varphi^1_t=0 \enspace\text{ a.e.~in } \Omega_R\setminus\Uu_1,\\
&\varphi^2_t=0 \enspace\text{ a.e.~in } \Omega_R\setminus\Uu_2,\\
& \bar\varphi_t =0 \enspace\text{ a.e.~in }\Omega_R\setminus\bar{\Uu}\\
& \tilde{\varphi}_t =0 \enspace\text{ a.e.~in } \Omega_R\setminus\tilde{\Uu}. 
\end{aligned}
\end{equation}
Furthermore,
\eqref{eq:mincxty} implies          
\begin{equation}\label{sumalsomin}
\begin{split}
\int_{\Omega_R}\Big(\tfrac{1}{2}L(\varphi^1_t,\varphi^1_x)+&\tfrac{1}{2}L(\bar{\varphi}_t,\bar{\varphi}_x)-L(\tilde{\varphi}_t,\tilde{\varphi}_x)\Big)\,\dx\dt=0.
\end{split}
\end{equation}
The convexity of $L$ and \eqref{sumalsomin} implies                
\begin{equation*}
\tfrac{1}{2}L(\varphi^1_t,\varphi^1_x)+\tfrac{1}{2}L(\bar\varphi_t,\bar\varphi_x)-L(\tilde{\varphi}_t,\tilde{\varphi}_x)=0 ,\quad \text{a.e. in } \Omega_R.  
\end{equation*}
Consequently, the following also holds
\begin{equation}\label{eq:Fcxcomb=0}
\tfrac{1}{2}L(\varphi^1_t,\varphi^1_x)+\tfrac{1}{2}L(\bar\varphi_t,\bar\varphi_x)-L(\tilde{\varphi}_t,\tilde{\varphi}_x)=0 ,\quad \text{a.e. in }\quad \Uu_1\cap\bar{\Uu}\cap\tilde{\Uu}.  
\end{equation}
Because $L$ is strictly convex in $\Rr\times \Rr^+$ and $\Uu_1\cap\bar{\Uu}\cap\tilde{\Uu}=\Uu_1\subset\Rr\times \Rr^+$, we obtain from \eqref{eq:Fcxcomb=0} that
\begin{equation*}
\begin{cases}
\varphi^1_t=\bar\varphi_t
\\
\varphi^1_x=\bar\varphi_x.
\end{cases}\quad\text{a.e.  in}\quad \Uu_1 
\end{equation*}
Hence, 
\begin{equation}\label{eq:equality-in-U1}
\begin{cases}
\varphi^1_t=	\varphi^2_t
\\ 
\varphi^1_x=	\varphi^2_x.
\end{cases} \quad\text{a.e.  in}\quad \Uu_1
\end{equation}
Taking $\varphi^2$ instead of $\varphi^1$ in \eqref{eq:mincxty} and arguing as before, we obtain
\begin{equation}\label{eq:equality-in-U2}
\begin{cases}
\varphi^1_t=	\varphi^2_t
\\ 
\varphi^1_x=	\varphi^2_x.
\end{cases} \quad\text{a.e.  in}\quad \Uu_2
\end{equation}
Combing  \eqref{eq:defLL0}, \eqref{eq:equality-in-U1} and \eqref{eq:equality-in-U2}, we conclude that $\varphi^1=\varphi^2$.
\end{proof}

Next, we prove that the infimum in \eqref{var-problem-c} is bounded.
\begin{proposition}\label{pro-bound-L1}
Assume that Assumptions \ref{hyp: grow-L}-\ref{hyp: m_0-comp}  hold. 
Then, there exist positive constants, $C_1$ and $C_2$, depending only on the problem data such that
\begin{equation}
	\label{inf-bound}
-C_2\leq\inf_{\varphi \in \mathcal{A}(\Omega_R)} I[\varphi]  \leq C_1.
\end{equation}
Furthermore, there exists a positive constant, $C$, depending only on problem data, such that  for every minimizing sequence $\{\varphi^n\}_{n\in\Nn}$ of the variational problem \eqref{var-problem-c}, we have
	\begin{equation}\label{prop-L1-bound-comp}
		\int_{\Omega_R} \tfrac{\left|\varphi_t^n\right|^p}{(\varphi^n_x)^{p-1}}\dx \dt,\quad	\|\varphi^n_t\|_{L^1(\Omega_R)} \leq C.
	\end{equation}
\end{proposition}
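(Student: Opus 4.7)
My strategy is to treat the upper and lower bounds separately, with the lower bound argument simultaneously giving the $L^1$ estimate on $\varphi_t$. The key tool in both parts is the coercivity inequality $L(z,y)\geq c|z|^p/y^{p-1}$ on $\{y>0\}$ (with $0/0=0$) implied by Assumption \ref{hyp: grow-L}, together with the probability density constraint $\int_{-R}^R \varphi_x(t,\cdot)\dx=1$.

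For the upper bound, I would use as a test function the potential $\varphi^\ast$ coming from \eqref{eq:Potential in terms of MFGs}, built from the MFG solution $(u,m,\varpi)$ whose existence and regularity are recalled at the start of Section \ref{sec:Variational Approach compact}. By Remark \ref{comp-rem} and Corollary \ref{cor-AU-to-AO}, $\varphi^\ast$ can be identified with an element of $\mathcal{A}(\Omega_R)$. Since $\varphi^\ast_x=m$ is bounded, $\varphi^\ast_t=H'(\varpi+u_x)m$ is bounded (because $u_x$ is bounded, $\varpi$ is Lipschitz, and $H'$ is continuous), and $L(\varphi^\ast_t,\varphi^\ast_x)=F(H'(\varpi+u_x))m$ is also bounded, the integrand of $I[\varphi^\ast]$ is uniformly bounded on the compact set $\Omega_R$. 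This gives $\inf I\leq I[\varphi^\ast]\leq C_1$ with $C_1$ depending only on problem data.

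For the lower bound and the $L^1$ control, let $\varphi\in\mathcal{A}(\Omega_R)$ with $I[\varphi]<\infty$, and set $J[\varphi]=\int_{\Omega_R}|\varphi_t|^p/\varphi_x^{p-1}\dx\dt$. The finiteness of $I[\varphi]$ forces $\varphi_t=0$ a.e.~on $\{\varphi_x=0\}$, so the coercivity inequality gives $\int_{\Omega_R}L(\varphi_t,\varphi_x)\dx\dt\geq cJ[\varphi]$. Applying H\"older with exponents $p$ and $p/(p-1)$ in the $x$-variable and using $\int_{-R}^R\varphi_x\dx=1$ yields
\begin{equation*}
\int_{-R}^R |\varphi_t(t,x)|\dx \leq \Bigl(\int_{-R}^R \tfrac{|\varphi_t(t,x)|^p}{\varphi_x(t,x)^{p-1}}\dx\Bigr)^{1/p},
\end{equation*}
and a second H\"older application in $t$ gives $\|\varphi_t\|_{L^1(\Omega_R)}\leq T^{(p-1)/p}J[\varphi]^{1/p}$. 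Combining this with \eqref{eq-need-for-pro-4.7} and the trivial estimate $\bigl|\int_{\Omega_R}V\varphi_x\dx\dt\bigr|\leq T\|V\|_{L^\infty([-R,R])}$ (again using $\int\varphi_x=1$), we obtain
\begin{equation*}
I[\varphi]\geq cJ[\varphi] - T\|V\|_{L^\infty([-R,R])} - \mbox{Lip}(u_T)\,T^{(p-1)/p}J[\varphi]^{1/p}.
\end{equation*}
Since $p>1$, the scalar function $s\mapsto cs-\mbox{Lip}(u_T)T^{(p-1)/p}s^{1/p}$ is bounded below on $[0,\infty)$ by a constant depending only on data, which provides the lower bound $-C_2$.

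For a minimizing sequence $\{\varphi^n\}$, the upper bound gives $I[\varphi^n]\leq C_1+1$ for large $n$, so the displayed inequality above yields a uniform bound on $J[\varphi^n]$, hence on $\|\varphi^n_t\|_{L^1(\Omega_R)}$ via the H\"older estimate. The main obstacle I anticipate is the careful bookkeeping around the singular set $\{\varphi_x=0\}$ when invoking H\"older and the coercivity of $L$; once the convention from \eqref{L def} is used to confine $\varphi_t$ to $\{\varphi_x>0\}$, the argument is a clean interpolation. A secondary issue is confirming that the MFG-induced candidate $\varphi^\ast$ actually meets every admissibility requirement in \eqref{def:admissible-A}, particularly the pointwise boundary value $\varphi(t,-R)=0$ and the balance identity \eqref{for-Poincare}, both of which follow from the compact support of $m$ and integration of the continuity equation.
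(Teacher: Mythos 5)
Your proposal is correct, and both halves reach the stated bounds, but the upper-bound argument takes a genuinely different route from the paper. Where you test the functional with the MFG-induced potential $\varphi^\ast$ from \eqref{eq:Potential in terms of MFGs} (legitimate under Assumptions A1--A5, since Theorem 1 of \cite{gomes2018mean} supplies $(u,m,\varpi)$ with $m$, $u_x$ bounded and $\varpi$ Lipschitz, so the integrand $F(H'(\varpi+u_x))m$ is bounded), the paper instead uses the explicit, data-only competitor $\varphi^0(t,x)=M_0(x-q(t))$ with $q(t)=\int_0^t Q$, for which $\varphi^0_x=m_0(x-q(t))$ and $\varphi^0_t=-m_0(x-q(t))Q(t)$. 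The paper's choice buys two things: it keeps the analysis of the variational problem entirely independent of the PDE existence theory (which is the stated purpose of Section \ref{sec:Variational Approach compact} and explains the requirement $R>R_0+\|Q\|_{L^1}$ in \eqref{eq:R selection}), and it simultaneously establishes that $\mathcal{A}(\Omega_R)$ is nonempty, a fact reused later for $\mathcal{K}(\Omega_R)$; your choice is shorter but reimports the MFG solution and its regularity, and does require the admissibility checks you flag at the end. For the lower bound and \eqref{prop-L1-bound-comp}, your argument (coercivity $L(z,y)\geq c|z|^p/y^{p-1}$, H\"older against $\int_{-R}^R\varphi_x=1$ to get $\|\varphi_t\|_{L^1}\leq T^{(p-1)/p}J[\varphi]^{1/p}$, then boundedness below of $s\mapsto cs-As^{1/p}$) is essentially the same interpolation the paper performs via Young's inequality with the parameter $\varepsilon=c/(2\,\mathrm{Lip}(u_T))$; a minor advantage of your version is that it yields the lower bound $I[\varphi]\geq -C_2$ directly for every admissible $\varphi$ rather than only along minimizing sequences. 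Both versions share the same harmless imprecision that the uniform constant in \eqref{prop-L1-bound-comp} really only applies for $n$ large enough that $I[\varphi^n]\leq C_1+1$.
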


\begin{proof} 
First, we prove the upper bound in \eqref{inf-bound}.
Let
\begin{equation*}%\label{phi-0-comp}
	\varphi^0(t,x)=M_0(x-q(t)), 
\end{equation*}
where $M_0$ is defined by \eqref{def-M0} and  $q(t)=\int_{0}^{t}Q(\tau)~d\tau$. 
Therefore, since $q(0)=0$, we have
\begin{equation}\label{phi001}
 \varphi^0(0,x)=\int_{-R}^{x} m_0(y)\dy,\quad	\varphi^0_x=m_0(x-q(t)),\quad \varphi^0_t=-m_0(x-q(t))Q(t).
\end{equation}
Thus, 
\begin{equation}\label{eq:A(Omega_R) not empty}
\varphi^0\in\mathcal{A}(\Omega_R).
\end{equation} 
Taking into account \eqref{phi001}, we have
\begin{align}\label{up-bound}
\inf_{\mathcal{A}(\Omega)} I[\varphi]\leq	I[\varphi^0] & \leq \int_{\Omega_R}L(\varphi^0_t,\varphi^0_x) +||V||_{L^\infty([-R,R])}\varphi_x^0 +||u_T^\prime||_{L^\infty(\Omega_R)}|\varphi_t^0|~\dx\dt =: C_1 .
%\\
%&\quad  + \left| \int_{-R}^{R} u'_T(x) \varphi^0(T,x) \dx \right|.
\end{align}
%\begin{proposition}\label{prop-L1-bounds} Assume that Assumptions \ref{hyp: grow-L}-\ref{hyp: V-uT convex DS} hold. Let $\{\varphi^n\}_{n\in\Nn}$ be a minimizing sequence of \eqref{varproblem}. Then, there exist a positive constant, $C$, depending only on problem data such that
%	\begin{equation}\label{prop-L1-bound-comp}
% \int_{\Omega_R} \tfrac{(\varphi_t^n)^p}{(\varphi^n_x)^{p-1}}\dx \dt,\quad	\|\varphi^n_t\|_{L^1(\Omega_R)} \leq C.
%	\end{equation}
%\end{proposition}
Next, relying on this bound, we prove \eqref{prop-L1-bound-comp}, implying the lower bound in \eqref{inf-bound}. 
By Assumption \ref{hyp: grow-L} and \ref{eq: Legendre tr}, $ L\geq 0$. Thus, for all $\varphi \in \mathcal{A}(\Omega_R)$, we have
\begin{equation}\label{first}
-\int_{\Omega_R}V\varphi_x+u^\prime_T \varphi_t\dx\dt \leq I[\varphi].
\end{equation}
%By  Assumption \ref{hyp: V-uT Lipschitz 2nd D bounded DS}, we deduce that
%\begin{equation}\label{phi-t-bo}
%\begin{split}
%	\left| \int_{-R}^{R} u^\prime_T(x) \varphi(T,x) \dx\right|  =&\left|  \int_{-R}^{R}  u^\prime_T(x)\varphi(0,x) \dx + \int_{\Omega_R} u^\prime_T(x) \varphi_t(t,x) \dx \dt\right|
%	\\
%	\leq & C(u_T,R) +\mbox{Lip}(u_T) \int_{\Omega_R}  \left|\varphi_t(t,x)\right| \dx \dt.
%\end{split}
%\end{equation}  
Recalling that $V$ is continuous and taking into account  Assumption \ref{hyp: V-uT Lipschitz 2nd D bounded DS} by  \eqref{eq-need-for-pro-4.7} and  \eqref{first}, we get
\begin{equation}\label{for-lower-bound}
		-C(u_T,R,V)- \mbox{Lip}(u_T)  \int_{\Omega_R}\left| \varphi_t (t,x) \right|\dx \dt \leq I[\varphi].
\end{equation}
From \eqref{up-bound} follows that for any minimizing sequence $\{\varphi^n\}_{n\in\Nn}$, there exists $N$ such that $n \geq N$ implies $I[\varphi^n]\leq C_1+1$. 
Consequently, recalling the definition of $L$ in \eqref{L def}, by Assumption \ref{hyp: grow-L} and \eqref{eq-need-for-pro-4.7}, we deduce that 
\begin{equation}\label{phi_t-over-phix}
\begin{split}
c	\int_{\Omega_R} \frac{\left|\varphi_t^n\right|^p}{(\varphi^n_x)^{p-1}}\dx \dt \leq& C_1+1+||V||_{L^\infty([-R,R])}\int_{\Omega_R}\varphi^n_x\dx\dt+ \mbox{Lip}(u_T) \int_{\Omega_R}  \left|\varphi^n_t(t,x)\right| \dx \dt\\ \leq&C+\mbox{Lip}(u_T) \int_{\Omega_R}  \left|\varphi^n_t(t,x)\right| \dx \dt,
\end{split}
\end{equation}
for all $n \in \Nn$. On the other hand, by Young's inequality, we have
\begin{equation}\label{eq-phi_t-bound}
\int_{\Omega_R} \left| \varphi_t^n \right| \dx \dt =  \int_{\Omega_R} \frac{\left| \varphi_t^n \right|}{( \varphi^n_x)^{\frac{p-1}{p}}} ( \varphi^n_x)^{\frac{p-1}{p}}\dx \dt \leq \varepsilon \int_{\Omega_R}  \frac{\left|\varphi_t^n\right|^p}{ (\varphi^n_x)^{p-1}}\dx \dt + C(\varepsilon) \int_{\Omega_R} \varphi_x^n\dx \dt,
\end{equation}
 where  $\varepsilon=\frac{c}{2\mbox{Lip}(u_T)}$. 
Recalling that $\int_{-R}^{R} \varphi_x(\cdot,x)\dx=1$, the preceding inequality and \eqref{phi_t-over-phix} imply \eqref{prop-L1-bound-comp}.  
Finally, \eqref{prop-L1-bound-comp} and \eqref{for-lower-bound} yield the lower bound in \eqref{inf-bound}. 
\end{proof}

Thus, for all minimizing sequences of the variational problem \eqref{var-problem-c}, we obtain uniform bounds in $W^{1,1}(\Omega_R)$. Therefore, any minimizing sequence has a weakly convergent sub-sequence in  $BV(\Omega_R)$ (\cite{evansgariepy2015}, Chapter 5). 
However, it is not guaranteed that the infimum in \eqref{var-problem-c}  is attained in $\Aa(\Omega_R)$. Therefore, we enlarge the set of admissible functions by relaxing the conditions defining $\mathcal{A}(\Omega_R)$, as we present in the next section.

\subsection{Relaxed variational problem}%\label{relax-comp}
Here, we relax the variational problem \eqref{var-problem-c} to ensure the existence of minimizers in the set of admissible functions.

First, we extend the functional in \eqref{var-problem-c} to the convex set
\[
	BV_0^+(\Omega_R)=\{\psi\in BV(\Omega_R):\psi_x\geq 0\}.
\]
For that, let $W:BV_0^+(\Omega_R) \to \Rr \cup \{+\infty\}$ be given by
\begin{equation*}
W[\varphi]=\begin{cases}
\int_{\Omega_R} L(\varphi_t,\varphi_x) -V\varphi_x-u^\prime_T(x) \varphi_t~\dx\dt,~ \varphi\in W^{1,1}(\Omega_R)\cap BV_0^+(\Omega_R)\\
+\infty, \quad \mbox{otherwise}.
  \end{cases}
\end{equation*}
%Considering the {\color{red} lower semi-continuous envelope Add Reference}, 
In $BV(\Omega_R)$ we consider the intermediate convergence; that is, $(\varphi_k)_{k\in\Nn}\subset BV(\Omega_R)$ converges  to $\varphi \in BV(\Omega_R)$ in the intermediate (or strict) sense if 
\[
	\varphi_k \to \varphi \mbox{ in } L^1(\Omega_R)\quad \mbox{and} \quad \|D\varphi_k\|(\Omega_R) \to \|D\varphi\|(\Omega_R),
\]
where $\|D\varphi\|(\Omega_R)$ is the total variation of the measure $D\varphi$ on $\Omega_R$ (see \cite{AFP2000}). We recall that  $W^{1,1}(\Omega_R)$ is dense in $BV(\Omega_R)$ with respect to the intermediate convergence (see Theorem 10.1.2 in \cite{attouch2014variational}). We aim to define a functional $\Ww$, the sequential lower semicontinuous envelope of $W$ w.r.t. intermediate convergence on $BV$ (Chapter 3, \cite{DalMasoGammaIntro}); that is
\begin{align*}
	\Ww[\varphi]=&\sup\left\{G[\varphi]:~ G\leq W,~ G \mbox{ is sequentially lower semicontinuous on } BV(\Omega_R) \right.
	\\
	& \quad \quad \left. \mbox{ w.r.t. intermediate convergence}\right\},
\end{align*}
which is the greatest functional below $W$ that is sequentially lower semi-continuous w.r.t intermediate convergence in $BV(\Omega_R)$. Let
\begin{align*}%\label{func-seq-comp}
\mathcal{J}[\varphi]=&\inf\Biggl\{\Biggr. \liminf\limits_{n\to\infty} I[\varphi^n]:\, \{\varphi^n\}\subset W^{1,1}(\Omega_R)\cap BV_0^+(\Omega_R),
\\ 
&  \quad \quad \quad  \varphi^n\to\varphi\text{ in the sense of the intermediate convergence in } BV(\Omega_R) \Biggl.\Biggr\}. 
\end{align*}
Next, we prove that actually $\Ww=\mathcal{J}$ and obtain explicit expression for $\mathcal{J}$. 

Assuming that Assumption \ref{hyp: grow-L} holds for some $c>0$, $p>1$ and arguing as in \eqref{eq-phi_t-bound} by using Young's inequality, we obtain
\begin{equation}\label{eq-lower-bound-for-L}
	cp(|v_1|+v_2)\leq L(v_1,v_2)+c(2p-1)v_2, \quad (v_1,v_2)\in\Rr\times\Rr^+_0.
\end{equation}
Let
\begin{equation}\label{def-f}
	f(v_1,v_2)=L(v_1,v_2)-u^\prime_Tv_1-Vv_2=f_{N}(v_1,v_2)+f_{L}(v_1,v_2),
\end{equation}
where $f_{N}(v_1,v_2)=L(v_1,v_2)+c(2p-1)v_2$ and $f_{L}(v_1,v_2)=-u^\prime_Tv_1-(V+c(2p-1))v_2$.
According to Lemmas 8.1 and 8.3 in \cite{DRT2021Potential}, if Assumption \ref{hyp: H convex} holds, the function $L$ defined in \eqref{L def} is convex and lower semicontinuous  in $\Rr\times\Rr_0^+$, therefore, $f_{N}$ as well. 

%
%Arguing as in \eqref{L-bar-def}, for the recession function, $\bar{f}_N$, of $f_N$, we have
%\begin{equation}\label{def-bar-f}
%	\bar{f}_N(v_1,v_2)=f_{N}(v_1,v_2).
%\end{equation}  

For the next result,  we compute the  recession function, $\bar{f}_N$, of $f_N$, which is given by 
\[
	\bar{f}_N(z,y):= \sup\left\{ f(w_1+z,w_2+y)-f(w_1,w_2):~ (w_1,w_2)\in \mbox{dom}_e (f_N )\right\}, \quad (z,y) \in \Rr\times\Rr,
\]
where 
\[
	\mbox{dom}_e(f_N):= \left\{ (w_1,w_2) \in \Rr \times \Rr_0^+:~ f_N(w_1,w_2) < +\infty \right\}.
\] 
Because $f_N$ is convex, from Theorem 4.70 in \cite{FoLe07}, we have
\begin{equation*}
	\bar{f}_N(z,y) = \lim\limits_{t\to\infty}\frac{f_N((z,y)t+(w_1,w_2))-f_N(w_1,w_2)}{t},
\end{equation*}
for any $(w_1,w_2)\in\Rr\times\Rr_0$.
Taking $(w_1,w_2)=(0,0)$ in the preceding equation and considering \eqref{L def}, we deduce that $f_N$ is equal to its  recession function; that is,
\begin{equation*}%\label{def-f_N-bar}
	\bar{f}_N(z,y)=f_N(z,y) = \begin{cases} F\left( \frac{z}{y}\right) y+c(2p-1)y & (z,y) \in \Rr\times\Rr^+,
	\\
+\infty & z\neq 0, y=0,
	\\
 0& z=0, y= 0,\end{cases}
\end{equation*}
where constants $c$ and $p$ are given by Assumption \ref{hyp: grow-L}.
Using the preceding observation, we  prove that the first integrant of the functional in \eqref{functional-v} is sequentially lower semi-continuous w.r.t. the weak -$*$ convergence of measures.

\begin{pro}\label{pro-semi-lower1} 
Suppose that Assumption \ref{hyp: H convex} holds. 
Let $(v_1^n,v_2^n)\in L^1(\Omega_R)\times L^1(\Omega_R;\Rr_0^+)$ and $\mu=(\mu_1,\mu_2)\in\Mm(\Omega_R)\times\Mm(\Omega_R;\Rr_0^+)$ be such that 
%{\color{red} Add References. Introduce notation}
\begin{equation*}
	(v_1^n,v_2^n)\Ll^2\lfloor \Omega_R\overset{\ast}{\rightharpoonup}(\mu_1,\mu_2),\quad\text{ in }  \Mm(\Omega_R)\times\Mm(\Omega_R;\Rr_0^+).
\end{equation*}
Then,
\begin{align*}
&\liminf\limits_{n\to\infty}\int_{\Omega_R}f_N(v_1^n,v_2^n)\dx\dt
\\
&\geq \int_{\Omega_R}f_N\left( \frac{d\mu}{d \Ll^2}(t,x)\right) \dx\dt+ \int_{\Omega_R}f_N\left( \frac{d\mu_s}{d ||\mu_s||}(t,x)\right) d||\mu_s||(t,x),
\end{align*}
where $\mu=\frac{d\mu}{d \Ll^2}\lfloor\Omega_R+\mu_s$ is the Radon-Nikodym decomposition of $\mu$ and $\frac{d\mu_s}{d ||\mu_s||}$ is  the Radon-Nikodym derivative of $\mu_s$ with respect to  its total variation.
\end{pro}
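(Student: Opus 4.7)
The plan is to recognize the claimed inequality as an instance of Reshetnyak's classical lower semicontinuity theorem for convex, positively $1$-homogeneous integrands acting on vector-valued Radon measures. The structural fact that drives the argument is that $f_N$ is positively $1$-homogeneous on $\Rr\times\Rr_0^+$: from $L(tz,ty)=F(tz/(ty))(ty)=tL(z,y)$ for every $t>0$, it follows that $f_N(tv_1,tv_2)=tf_N(v_1,v_2)$, consistently with the identity $\bar{f}_N=f_N$ already recorded above. Extending $f_N$ to $\Rr^2$ by $+\infty$ off $\Rr\times\Rr_0^+$ preserves convexity and lower semicontinuity, so $f_N$ qualifies as a Reshetnyak integrand on $\Rr^2$ (see, e.g., Theorem~2.38 in Ambrosio--Fusco--Pallara, \emph{Functions of Bounded Variation and Free Discontinuity Problems}, or the analogous statement in Fonseca--Leoni).

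With this in place, I would view $\nu_n:=(v_1^n,v_2^n)\Ll^2\lfloor\Omega_R$ and $\mu=(\mu_1,\mu_2)$ as $\Rr^2$-valued Radon measures on $\Omega_R$, and observe that by hypothesis $\nu_n\overset{\ast}{\rightharpoonup}\mu$. Because $v_2^n\geq 0$ a.e.\ and $\mu_2$ takes values in $\Rr_0^+$, every density that appears below lies in the effective domain of $f_N$, so the infinite branch is never activated. Introducing the common reference measure $\lambda:=\Ll^2\lfloor\Omega_R+\|\mu_s\|$ and using positive $1$-homogeneity of $f_N$, the Lebesgue decomposition $\mu=(d\mu/d\Ll^2)\Ll^2\lfloor\Omega_R+\mu_s$ gives
\begin{equation*}
\int_{\Omega_R} f_N\!\left(\frac{d\mu}{d\lambda}\right)d\lambda=\int_{\Omega_R} f_N\!\left(\frac{d\mu}{d\Ll^2}\right)\dx\dt+\int_{\Omega_R} f_N\!\left(\frac{d\mu_s}{d\|\mu_s\|}\right)d\|\mu_s\|,
\end{equation*}
and, since $\nu_n\ll\Ll^2$,
\begin{equation*}
\int_{\Omega_R} f_N\!\left(\frac{d\nu_n}{d\lambda}\right)d\lambda=\int_{\Omega_R} f_N(v_1^n,v_2^n)\dx\dt.
\end{equation*}

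To conclude, Reshetnyak's theorem applied to $\nu_n\overset{\ast}{\rightharpoonup}\mu$ yields
\begin{equation*}
\int_{\Omega_R} f_N\!\left(\frac{d\mu}{d\lambda}\right)d\lambda\leq \liminf_{n\to\infty} \int_{\Omega_R} f_N\!\left(\frac{d\nu_n}{d\lambda}\right)d\lambda,
\end{equation*}
and combining with the two displayed identities gives the stated inequality. The only delicate point I anticipate is checking that $f_N$, in its extended-real form, fits the exact hypotheses of the version of Reshetnyak's theorem invoked, and in particular that $\int f_N(d\mu/d\lambda)d\lambda$ is independent of the dominating measure $\lambda$; both facts are consequences of standard manipulations with recession functions of convex $1$-homogeneous integrands, so I do not expect a serious obstacle.
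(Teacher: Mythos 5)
Your proposal is correct and follows essentially the same route as the paper: the paper also reduces the claim to a standard lower semicontinuity theorem for convex integrands of weak-$*$ convergent measures (Theorem 5.19 in Fonseca--Leoni) after noting that $f_N$ is convex, lower semicontinuous, nonnegative, and coincides with its recession function $\bar f_N$ — which is exactly the positive $1$-homogeneity you exploit to invoke Reshetnyak's theorem. The two cited theorems are interchangeable here, since the Fonseca--Leoni statement is the general convex version whose singular term involves $\bar f_N$, and your Reshetnyak version is its $1$-homogeneous specialization.
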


\begin{proof} 
By Assumption \ref{hyp: H convex}, $f_N$ is convex and lower semi-continuous in $\Rr\times\Rr_0^+$ and $\bar{f}_N=f_N$. Hence, the proof follows from Theorem 5.19 in \cite{FoLe07}.
\end{proof}

\begin{pro}\label{pro-semi-lower2} 
Suppose that  Assumption \ref{hyp: V-uT Lipschitz 2nd D bounded DS} holds. Let $v_1^n\to v_1$ and $v_2^n\to v_2$ weakly  in $\Mm(\Omega_R)$.
Then, 
\begin{equation*}
\lim\limits_{n\to\infty}\left( -\int_{\Omega_R}(V+C)v_1^n +u_T^\prime v^n_2\dx\dt \right) = -\int_{\Omega_R}(V+C)v_1  +u^\prime_T(x) v_2 ~\dx dt,
\end{equation*}
for any $C\in\Rr$.
\end{pro}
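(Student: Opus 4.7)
The plan is to derive the statement directly from the definition of weak-$*$ convergence of Radon measures on the compact set $\Omega_R$. The only real content is to verify that $V+C$ and $u_T'$ are admissible continuous test functions.

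First, I would observe that, by Assumption \ref{hyp: V-uT Lipschitz 2nd D bounded DS}, $V\in C^2(\Rr)$ and $u_T\in C^2(\Rr)$, so in particular $u_T'\in C^1(\Rr)$. Regarded as functions on $\Omega_R=[0,T]\times[-R,R]$ depending only on $x$, both $V+C$ and $u_T'$ therefore belong to $C(\Omega_R)$ for any $C\in\Rr$, because $\Omega_R$ is compact and continuous functions on $\Rr$ are bounded on it.

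Second, I would recall that, by the Riesz representation theorem, $\Mm(\Omega_R)$ is the topological dual of $C(\Omega_R)$. Hence the weak (i.e.\ weak-$*$) convergence $v_i^n\to v_i$ assumed in the hypothesis is, by definition, equivalent to
\[
\int_{\Omega_R} g\, dv_i^n \longrightarrow \int_{\Omega_R} g\, dv_i \qquad \text{for every } g\in C(\Omega_R),\ i=1,2.
\]
Applying this with $g(t,x)=V(x)+C$ for $i=1$ and with $g(t,x)=u_T'(x)$ for $i=2$, then combining both convergences linearly with the appropriate signs, immediately yields the claimed identity.

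There is essentially no obstacle here; the content of the proposition is structural rather than analytic. It complements Proposition \ref{pro-semi-lower1} by showing that, under the decomposition \eqref{def-f} $f=f_N+f_L$, the linear part $f_L$ behaves continuously along weak-$*$ convergent sequences of measures, while the nonlinear convex part $f_N$ is only lower semicontinuous. Combined, these two results should yield the lower semicontinuity of the relaxed functional needed to establish existence of a minimizer.
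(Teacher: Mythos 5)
Your proof is correct and takes exactly the same approach as the paper, whose entire proof is the one-line observation that $V$ and $u_T'$ are continuous (hence admissible test functions on the compact set $\Omega_R$); you have simply spelled out the details. Nothing further is needed.
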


\begin{proof}
It is enough to notice that the functions $V$ and $u'_T(x)$ are continuous.
\end{proof}
Now, we are ready to prove that $\Ww=\mathcal{J}$.
\begin{theorem}\label{theorem-lower} Suppose that Assumptions \ref{hyp: H convex} and \ref{hyp: V-uT Lipschitz 2nd D bounded DS} hold for some $c>0$ and $p>1$. Then,  for every $\varphi\in BV_0^+(\Omega_R)$
\begin{equation*}%\label{theorem-lower-eq-1}
\begin{split}
\Ww[\varphi]=\mathcal{J}[\varphi]&= \int_{\Omega_R}f_N\left( \frac{d(D_{t,x}\varphi)}{d \Ll^2}(t,x)\right)~\dx\dt-\int_{\Omega_R} (V+c(2p-1))\varphi_x -\int_{\Omega_R}u^\prime_T(x) \varphi_t    \\&+ \int_{\Omega_R}f_N\left( \frac{d(D_{t,x}\varphi)_s}{d ||(D_{t,x}\varphi)_s||}(t,x)\right) d||(D_{t,x}\varphi)_s||(t,x).
\end{split}
\end{equation*}
\end{theorem}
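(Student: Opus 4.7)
Let $\mathcal{F}[\varphi]$ denote the right-hand side of the claimed formula. The plan is to establish $\mathcal{J}[\varphi]=\mathcal{F}[\varphi]$ by proving matching inequalities, and then to identify $\mathcal{W}$ with $\mathcal{J}$ using standard properties of the sequential lower semicontinuous envelope on $BV(\Omega_R)$.

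For the lower bound $\mathcal{J}[\varphi]\geq \mathcal{F}[\varphi]$, I would fix any sequence $\varphi^n\in W^{1,1}(\Omega_R)\cap BV_0^+(\Omega_R)$ converging to $\varphi$ in the intermediate sense. Intermediate convergence forces the weak-$*$ convergence $(\varphi^n_t,\varphi^n_x)\Ll^2\lfloor\Omega_R \cws D_{t,x}\varphi$ in $\Mm(\Omega_R;\Rr)\times\Mm(\Omega_R;\Rr_0^+)$, with positivity of the second component preserved because $\varphi^n_x\geq 0$. Splitting $f=f_N+f_L$ as in \eqref{def-f}, Proposition \ref{pro-semi-lower1} applied to $f_N$ produces both the absolutely continuous and singular contributions appearing in $\mathcal{F}[\varphi]$, while Proposition \ref{pro-semi-lower2} with $C=c(2p-1)$ handles the linear part $f_L$. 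Taking $\liminf_{n\to\infty} I[\varphi^n]$ and then the infimum over admissible sequences gives the desired inequality.

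For the reverse bound $\mathcal{J}[\varphi]\leq \mathcal{F}[\varphi]$, I would construct an explicit recovery sequence by mollification. After extending $\varphi$ past $\partial\Omega_R$ in a way that preserves monotonicity in $x$ (for instance, by constant-in-$x$ extension outside $[-R,R]$), set $\varphi^n=\rho_{\varepsilon_n}*\tilde\varphi$ for standard mollifiers with $\varepsilon_n\to 0$. Because $\rho_{\varepsilon_n}\geq 0$ and $D_x\tilde\varphi\geq 0$ as a measure, $\varphi^n_x\geq 0$, so $\varphi^n\in W^{1,1}(\Omega_R)\cap BV_0^+(\Omega_R)$ and $\varphi^n\to\varphi$ in the intermediate sense. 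Since $f_N$ equals its own recession function by the computation preceding Proposition \ref{pro-semi-lower1}, Reshetnyak's continuity theorem (Theorem 2.39 in \cite{AFP2000}) would yield convergence of the $f_N$-integrals to precisely the nonlinear part of $\mathcal{F}[\varphi]$, while Proposition \ref{pro-semi-lower2} handles the linear $f_L$ term. The principal obstacle here is that $f_N$ takes the value $+\infty$ on $\{z\neq 0,\, y=0\}$, so Reshetnyak's theorem cannot be invoked verbatim; I would resolve this by approximating $f_N$ from below by an increasing sequence of finite continuous convex functions sharing the same recession, applying Reshetnyak to each, and then passing to the limit via monotone convergence. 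A secondary concern is boundary behavior under mollification, which I would handle by tailoring the extension of $\varphi$ so that the intermediate convergence restricts correctly to $\Omega_R$.

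For the final identity $\mathcal{W}=\mathcal{J}$, I would observe that $\mathcal{J}$ itself is sequentially l.s.c.\ on $BV(\Omega_R)$ with respect to intermediate convergence by a standard diagonalization argument, that $\mathcal{J}\leq W$ pointwise (by taking the constant sequence on $W^{1,1}(\Omega_R)\cap BV_0^+(\Omega_R)$, and trivially elsewhere because $W=+\infty$ there), and that any sequentially l.s.c.\ $G\leq W$ satisfies $G[\varphi]\leq\liminf_n G[\varphi^n]\leq\liminf_n W[\varphi^n]$ along any approximating sequence, whence $G\leq \mathcal{J}$. Taking the supremum over such $G$ yields $\mathcal{W}[\varphi]=\mathcal{J}[\varphi]$.
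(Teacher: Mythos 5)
Your overall architecture --- lower bound via Propositions \ref{pro-semi-lower1} and \ref{pro-semi-lower2}, recovery sequence by mollification, and identification of $\Ww$ with $\mathcal{J}$ by the standard envelope/diagonalization argument --- is a legitimate unpacking of what the paper dispatches in one line by citing the relaxation result (Remark 5.37 in \cite{FoLe07}) for the convex, superlinearly-bounded part $f_N$ together with Proposition \ref{pro-semi-lower2} for the continuous linear perturbation. The lower bound and the final identification $\Ww=\mathcal{J}$ are fine as you describe them.

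The gap is in the recovery-sequence step, specifically in your fix for the fact that $f_N=+\infty$ on $\{z\neq 0,\ y=0\}$. If $f_k\nearrow f_N$ with each $f_k$ finite, continuous and convex, then applying Reshetnyak's continuity theorem to each $f_k$ and taking the supremum in $k$ yields only
\begin{equation*}
\liminf_{n\to\infty}\int_{\Omega_R} f_N(\varphi^n_t,\varphi^n_x)\,\dx\dt\;\geq\;\int_{\Omega_R}f_N\left( \frac{d(D_{t,x}\varphi)}{d \Ll^2}\right)\dx\dt+ \int_{\Omega_R}f_N\left( \frac{d(D_{t,x}\varphi)_s}{d \|(D_{t,x}\varphi)_s\|}\right) d\|(D_{t,x}\varphi)_s\|,
\end{equation*}
which is the lower bound you already possess from Proposition \ref{pro-semi-lower1}. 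Monotone convergence cannot be upgraded to the $\limsup$ inequality $\limsup_n I[\varphi^n]\leq \mathcal{F}[\varphi]$ that a recovery sequence must satisfy: for that you would need equi-integrability of $f_N-f_k$ along the mollified sequence, which is not available. The correct tool is Jensen's inequality: since $L$, and hence $f_N$, is convex and positively $1$-homogeneous on $\Rr\times\Rr_0^+$ (this is precisely the content of the computation $\bar f_N=f_N$ preceding Proposition \ref{pro-semi-lower1}), one has $\int f_N(\rho_{\varepsilon}*D_{t,x}\tilde\varphi)\,\dx\dt\leq\int f_N\bigl(\tfrac{d D_{t,x}\tilde\varphi}{d\|D_{t,x}\tilde\varphi\|}\bigr)\,d\|D_{t,x}\tilde\varphi\|$ over a slightly enlarged domain, an inequality valid for convex integrands with values in $(-\infty,+\infty]$. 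This is the classical Goffman--Serrin upper bound; combined with your lower bound it gives $\mathcal{J}=\mathcal{F}$. Alternatively, replace the whole construction by the citation the paper uses.
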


\begin{proof} 
	Taking into account \eqref{eq-lower-bound-for-L} and the definition of $f_N$, \eqref{def-f}, we have
	\begin{equation*}
		cp(|v_1|+v_2)\leq f_N(v_1,v_2), \quad (v_1,v_2)\in\Rr\times\Rr^+_0.
	\end{equation*}
Recalling that $f_N$ is convex and using the preceding estimate, the proof follows from Remark 5.37 in \cite{FoLe07} and Proposition \ref{pro-semi-lower2}.
%By Theorem 10.2.1 in \cite{attouch2014variational} follows that  $\varphi(T,x)$ is well defined, and it belongs to $L^1(-R,R)$. 
%Furthermore, the trace of the function with bounded variation ($BV$) is continuous (see  \cite[Theorem 10.2.2]{attouch2014variational}) with respect to intermediate convergence in $BV$. 
%Using these, Propositions \ref{pro-semi-lower1} - \ref{pro-semi-lower2},  and  taking into account that $W[\varphi]= \mathcal{J}[\varphi]$ for $\varphi\in W^{1,1}(\Omega_R)$, we deduce the second equality in \eqref{theorem-lower-eq-1}.
% From the definitions of $W$, $\Ww$, and $\mathcal{J}$, we have that $\Ww[\varphi]\geq \mathcal{J}[\varphi]$ and $W[\varphi]=\Ww[\varphi]= \mathcal{J}[\varphi]$ for $\varphi\in W^{1,1}(\Omega_R)$, therefore  $\Ww[\varphi]= \mathcal{J}[\varphi]$ for all $\varphi\in BV_0^+(\Omega_R)$.
\end{proof}

Next, relying on Theorem \ref{theorem-lower}, we state the relaxed variational problem.
We set
\begin{align*}
\mathcal{K}_0(\Omega_R)=& \left\{ \varphi \in BV_0^+(\Omega_R):~ \supp(\varphi_t(t,\cdot)),\supp(\varphi_x(t,\cdot))\subseteq (-R,R)\right\},
\\
\mathcal{K}(\Omega_R)= & \left\{ \varphi \in \mathcal{K}_0(\Omega_R):~ \varphi(0,x)=\int_{-R}^x m_0(y)\dy,~\varphi(t,-R)=0,\right.  
\\
& \quad \left.   ~ \int_{-R}^{R} \varphi_x(t,x)=1,~\int_{-R}^{R} \varphi(t,x)-M_0(x)~\dx=-\int_{0}^{t}Q(s) ~\ds, ~t\in[0,T]\right\}.
\end{align*}
Note that $\mathcal{A}(\Omega_R) \subset \mathcal{B}(\Omega_R) \subset \mathcal{K}(\Omega_R)$, so \eqref{eq:A(Omega_R) not empty} guarantees that $\mathcal{K}(\Omega_R)$ is a nonempty convex set. Our relaxed variational problem is
\begin{equation}\label{var-relaxed}
\min_{\varphi\in\mathcal{K}(\Omega_R)}	\Ii[\varphi],
\end{equation}
where 
\begin{align*}
\Ii[\varphi]= &\int_{\Omega_R}f_N\left( \frac{d(D_{t,x}\varphi)}{d \Ll^2}(t,x)\right)\dx\dt -\int_{\Omega_R}(V+c(2p-1))\varphi_x -\int_{\Omega_R}u^\prime_T(x) \varphi_t  
\\
& +\int_{\Omega_R}f_N\left( \frac{d(D_{t,x}\varphi)_s}{d ||(D_{t,x}\varphi)_s||}(t,x)\right) d||(D_{t,x}\varphi)_s||(t,x).
\end{align*}
The next theorem proves the existence of solutions to the preceding variational problem.
\begin{theorem}\label{thm:existence} Suppose that Assumptions \ref{hyp: grow-L}-\ref{hyp: V-uT convex DS} hold. 
Then, there exists $\varphi\in\mathcal{K}(\Omega_R)$ such that 
\begin{equation*}
\Ii[\varphi]=\min_{\psi\in\mathcal{K}(\Omega_R)}	\Ii[\psi].
\end{equation*}
\end{theorem}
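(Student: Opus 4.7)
The plan is to apply the direct method of the calculus of variations to the relaxed functional $\Ii$ over $\mathcal{K}(\Omega_R)$. First, from the proof of Proposition \ref{pro-bound-L1} the function $\varphi^0$ lies in $\mathcal{A}(\Omega_R)\subset \mathcal{K}(\Omega_R)$, and a direct comparison using Theorem \ref{theorem-lower} gives $\Ii[\varphi^0]=I[\varphi^0]<+\infty$ (the $c(2p-1)\varphi_x$ terms cancel and the singular part vanishes for this smooth $\varphi^0$), so the infimum is finite from above. For a uniform lower bound I would replay the Young's-inequality absorption of \eqref{prop-L1-bound-comp} in the relaxed setting, using the coercivity $f_N(v_1,v_2)\geq cp(|v_1|+v_2)$ from \eqref{eq-lower-bound-for-L} on the absolutely continuous part together with the one-homogeneous formula for $f_N$ on the singular part of $D_{t,x}\varphi$. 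Then, given a minimizing sequence $\{\varphi^n\}\subset\mathcal{K}(\Omega_R)$, the admissibility constraints $\varphi^n_x\geq 0$, $\int_{-R}^{R}\varphi^n_x(t,\cdot)\dx=1$, and $\varphi^n(t,-R)=0$ immediately yield $0\leq\varphi^n\leq 1$ throughout $\Omega_R$, and the same coercivity bound applied to $\Ii[\varphi^n]$ provides a uniform bound on the total variation $\|D_{t,x}\varphi^n\|(\Omega_R)$.

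From here, the standard $BV$ compactness theorem (see \cite{AFP2000}) furnishes a non-relabeled subsequence with $\varphi^n\to\varphi$ strongly in $L^1(\Omega_R)$ and $D_{t,x}\varphi^n\overset{*}{\rightharpoonup}D_{t,x}\varphi$ weakly-$*$ in $\Mm(\Omega_R;\Rr^2)$ for some $\varphi\in BV(\Omega_R)$. The monotonicity $\varphi_x\geq 0$, the support conditions $\supp(\varphi_t(t,\cdot)),\supp(\varphi_x(t,\cdot))\subseteq(-R,R)$, the balance \eqref{for-Poincare}, and the normalization $\int_{-R}^{R}\varphi_x(t,\cdot)\dx=1$ all transfer to $\varphi$ by combining weak-$*$ convergence of derivatives with $L^1$ convergence against appropriate test functions in $t$ and $x$.

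The main obstacle, as I expect it, is preserving the trace conditions $\varphi(0,x)=M_0(x)$ and $\varphi(t,-R)=0$, since weak-$*$ $BV$ convergence does not in general preserve boundary traces. My plan to overcome this is an extension argument: extend each $\varphi^n$ to a larger rectangle $\Omega_R'=(-1,T)\times[-R-1,R+1]$ by setting $\varphi^n(t,x)=M_0(x)$ for $t\in(-1,0)$, $\varphi^n(t,x)=0$ for $x\in[-R-1,-R)$, and $\varphi^n(t,x)=1$ for $x\in(R,R+1]$. Because the prescribed boundary values of the original $\varphi^n$ already match these extensions, no singular surface measure appears on the seams $\{t=0\}$, $\{x=-R\}$, $\{x=R\}$; the extended sequence is thus uniformly bounded in $BV(\Omega_R')$, its distributional derivative stays supported in $\overline{\Omega_R}$, and passing to the weak-$*$ limit in $\Omega_R'$ the limit still equals $M_0$ for $t<0$ and $0$ (respectively $1$) for $x<-R$ (respectively $x>R$). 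This forces the traces of $\varphi$ on $\partial\Omega_R$ to match $M_0$ on $\{t=0\}$ and $0$ on $\{x=-R\}$, placing $\varphi\in\mathcal{K}(\Omega_R)$. Finally, Theorem \ref{theorem-lower} (whose absolutely continuous and singular parts are lower semicontinuous by Proposition \ref{pro-semi-lower1}), combined with Proposition \ref{pro-semi-lower2} for the affine lower-order terms, yields $\Ii[\varphi]\leq\liminf_{n\to\infty}\Ii[\varphi^n]=\inf_{\psi\in\mathcal{K}(\Omega_R)}\Ii[\psi]$, so $\varphi$ is the sought minimizer.
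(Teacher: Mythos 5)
Your overall strategy (direct method: finiteness of the infimum via $\varphi^0$, coercivity from $f_N(v_1,v_2)\geq cp(|v_1|+v_2)$, $BV$ compactness, then lower semicontinuity via Propositions \ref{pro-semi-lower1}--\ref{pro-semi-lower2}) is the same skeleton as the paper's, and those parts are sound; the computation $\Ii[\varphi^0]=I[\varphi^0]$ for the smooth competitor is correct. The difference is in how the boundary/initial conditions survive the passage to the limit, and this is where your argument has a genuine gap. The paper reduces to a minimizing sequence in $W^{1,1}(\Omega_R)\cap\mathcal{K}(\Omega_R)$ (using $\Ii=\mathcal{J}$ and density of $W^{1,1}$ in $BV$ for the intermediate convergence) and then invokes the fact that the trace operator is continuous with respect to \emph{intermediate} convergence (Theorem 10.2.2 in \cite{attouch2014variational}); you instead keep only weak-$*$ convergence in $BV$ and try to recover the traces by an extension argument.

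That extension argument does not work as stated. It is true that each extended $\tilde\varphi^n$ has no jump on the seams, so $\|D\tilde\varphi^n\|(\Omega_R')$ is uniformly bounded and the $L^1$ limit equals $M_0$ for $t<0$. But weak-$*$ convergence of $D\tilde\varphi^n$ to $D\tilde\varphi$ does not prevent derivative mass from concentrating \emph{onto} the seam in the limit, and when it does, $D\tilde\varphi$ acquires a jump part on $\{t=0\}$ equal to $(\operatorname{tr}^+\varphi-M_0)\,d\mathcal{H}^1$ even though no $\tilde\varphi^n$ had one. Concretely, $\varphi^n(t,x)=(1-\min(nt,1))M_0(x)+\min(nt,1)\psi(x)$ with $\psi\neq M_0$ another admissible cumulative distribution satisfies $\varphi^n(0,\cdot)=M_0$ for every $n$, is uniformly bounded in $BV$, and converges weakly-$*$ to a function whose trace at $t=0^+$ is $\psi$; the extension to $t<0$ by $M_0$ changes nothing. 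So the sentence ``passing to the weak-$*$ limit \ldots\ forces the traces of $\varphi$ on $\partial\Omega_R$ to match'' is false in general, and the conclusion $\varphi\in\mathcal{K}(\Omega_R)$ is not established. A repair along your lines is conceivable for the $\{t=0\}$ seam: apply the lower-semicontinuity theorem on the \emph{extended} open set and use that $f_N(z,0)=+\infty$ for $z\neq 0$, so a jump whose normal points in the $t$-direction would make the relaxed energy infinite, contradicting finiteness of the infimum. But this needs to be argued explicitly (and it does not handle the lateral seam $\{x=-R\}$, where the recession density $f_N(0,y)=c(2p-1)y$ is finite, so energy alone does not forbid a trace defect there); otherwise you should follow the paper and upgrade to intermediate convergence before invoking trace continuity.
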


\begin{proof} 
We recall that  $W^{1,1}(\Omega_R)$ is dense in $BV(\Omega_R)$ with respect to the intermediate convergence (see Theorem 10.1.2 in \cite{attouch2014variational}). Accordingly, we can take a minimizing sequence, $\{\varphi^n\}_{n=1}^\infty$,  such that  $\varphi^n\in W^{1,1}(\Omega_R)$. Therefore, 
\begin{equation*}
\min_{\psi\in\mathcal{K}(\Omega_R)}	\Ii[\psi]=\lim\limits_{n\to\infty}\Ii[\varphi^n] =\liminf\limits_{n\to\infty}I[\varphi^n],
\end{equation*}
where $I$, $\Ii$ are defined by \eqref{var-problem-c} and \eqref{var-relaxed}, respectively. 
Note that because 
\[
	\varphi^n_x\in \Pp(-R,R)\cap L^1(-R,R),
\]
there exists $\mu\in \Pp(-R,R)$, such that 
\begin{equation}\label{theo-phi-x}
||\varphi^n_x||_{L^1(\Omega_R)}\leq C,\quad\varphi^n_x\rightharpoonup\mu \text{ weakly in } \Mm(\Omega_R).
\end{equation}
Combining these estimates with the argument in Proposition \ref{pro-bound-L1}, we deduce that
\begin{equation}\label{theo-phi-t}
||\varphi^n_t||_{L^1(\Omega_R)}\leq C.
\end{equation}
%Therefore, $\nu\in \Mm(\Omega_R)$ such that $\varphi^n_t\overset{\ast}{\rightharpoonup}\nu$ weakly-$\ast$ in $\Mm(\Omega_R)$.
Consequently, because $\Omega_R$ is bounded, Prohorov lemma (see Theorem 2.29 in \cite{Werner}) gives the existence of $\nu\in \Mm(\Omega_R)$ such that $\varphi^n_t\rightharpoonup\nu$ weakly in $\Mm(\Omega_R)$.
On the other hand, because $\int_{-R}^{R}\varphi(t,x)-M_0(x)~\dx=-\int_{0}^{t}Q(s) ~\ds$, we have that $\left| \int_{-R}^{R} \varphi^n\dx\right|\leq C$, where $C$ does not depend on $\varphi$. 
Hence, by Poincar{\'e} inequality (see Theorem 1 in Section 5.8.1 in \cite{E6}) from \eqref{theo-phi-x} and \eqref{theo-phi-t}, we get
 \begin{equation*}
||\varphi^n||_{L^1(\Omega_R)}\leq  ||\varphi^n_x||_{L^1(\Omega_R)}+||\varphi^n_t||_{L^1(\Omega_R)}+C\leq C.
 \end{equation*}
Therefore, $||\varphi^n||_{W^{1,1}(\Omega_R)}\leq C$. Consequently, Rellich-Kondrachov Theorem (see Theorem 1, Section 5.7 in \cite{E6}) implies that there exists $\varphi\in L^{\alpha}(\Omega_R)$ for $\alpha\in[1,2)$, such that $\varphi^n$ converges to $\varphi$ strongly in $ L^{\alpha}(\Omega_R)$. In particular, $\varphi^n$ converges to $\varphi$ strongly in $ L^{1}(\Omega_R)$.
This convergence combined with \eqref{theo-phi-x} and \eqref{theo-phi-t} implies that there exists $\varphi\in BV(\Omega_R)$, such that $\varphi^n\to\varphi$ in the sense of intermediate convergence in $BV(\Omega_R)$. 
Finally, relying on this  and recalling that $\varphi^n\in\mathcal{K}(\Omega_R)\cap W^{1,1}(\Omega_R)$ from  \cite[Theorem 10.2.2]{attouch2014variational}, we deduce that $\varphi\in\mathcal{K}(\Omega_R)$. 
Moreover, recalling the definition $f_N$ and using Propositions \ref{pro-semi-lower1} and \ref{pro-semi-lower2}, we get
\begin{equation*}
 \min_{\psi\in\mathcal{K}(\Omega_R)}	\Ii[\psi]=\lim\limits_{n\to\infty}\Ii[\varphi^n] =\liminf\limits_{n\to\infty}I[\varphi^n]\geq 	\Ii[\varphi]=	\min_{\psi\in\mathcal{K}(\Omega_R)}	\Ii[\psi]. \qedhere
 \end{equation*}
\end{proof}

\begin{remark}
Under Assumptions \ref{hyp: H convex}-\ref{hyp: V-uT convex DS}, Theorem 1 in \cite{gomes2018mean} implies that $\varphi$ given by \eqref{eq:Potential in terms of MFGs} belongs to $ \mathcal{K}(\Omega_R)$ and is a minimizer of \eqref{var-relaxed}. Thus, under Assumptions \ref{hyp: grow-L}-\ref{hyp: V-uT convex DS}, if we have uniqueness, $\varphi$ given by Theorem \ref{thm:existence}, and $\varphi$ given by \eqref{eq:Potential in terms of MFGs} coincide.

\end{remark}

\section{Price as Lagrange multiplier}\label{sec:Price as lagrangem}

In this section, we provide a representation formula for the price $\varpi$ using the minimizer $\varphi$ of \eqref{var-problem-c}. This formula shows that the Lagrange multiplier associated with the balance constraint \eqref{for-Poincare} characterizes the price. 
%\begin{proposition}\label{pro-lag-mult}
%	Let $R$ satisfy \eqref{eq:R selection}. Assume that $\varphi\in \mathcal{A}(\Omega_{R})$ is regular and attains the infimum in \eqref{var-problem-c}. Assume further that $L(\varphi_t,\varphi_x)$ is finite on $[0,T]\times[-R,R]$. Then, $\varpi$ in the solution of \eqref{eq:MFG system} is characterized by a Lagrange multiplier $w:[0,T]\to \Rr$ associated with $\varphi$. 
%\end{proposition}
\begin{proposition}\label{pro-lag-mult} Suppose that Assumptions \ref{hyp: H convex}-\ref{hyp: V-uT convex DS} hold. 
Let $R$ satisfy \eqref{eq:R selection}. Let $(u,m,\varpi)$ solve \eqref{eq:MFG system} and let $\varphi\in \mathcal{A}(\Omega_{R})$ attain the minimum in \eqref{var-problem-c}. Furthermore, assume that $\varphi\in C^2(\Omega_R)$.  Then, $\varpi$ is given by a Lagrange multiplier $w:[0,T]\to \Rr$ associated with $\varphi$. 
\end{proposition}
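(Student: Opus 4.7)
The plan is to treat \eqref{var-problem-c} as a constrained minimization in which the balance condition \eqref{for-Poincare} is the only equation producing a nontrivial multiplier, apply an abstract Lagrange multiplier rule to obtain $w:[0,T]\to\Rr$, recognize via Proposition \ref{Pro: EL} that $(\varphi,w)$ solves the Euler--Lagrange system corresponding to \eqref{eq:MFG system}, and finally invoke the uniqueness in Problem \ref{PMFG} to identify $w$ with $\varpi$.

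First, I would sort the conditions defining $\mathcal{A}(\Omega_R)$. The Dirichlet values $\varphi(0,\cdot)=M_0$ and $\varphi(t,-R)=0$, together with the support condition, are preserved by variations $\beta \in C^2([0,T]\times[-R,R])$ satisfying $\beta(0,\cdot)\equiv 0$, $\beta(t,-R)=0$, and $\supp \beta(t,\cdot)\subset (-R,R)$. The mass normalization $\int_{-R}^{R}\varphi_x\dx = 1$ is then automatic, since for such $\beta$ one has $\int_{-R}^{R}\beta_x\dx = \beta(t,R)-\beta(t,-R) = 0$. The sign constraint $\varphi_x\ge 0$ is inactive at the critical point, because by Remark \ref{comp-rem} the relevant $\varphi$ corresponds to $\varphi_x = m$, which we may take strictly positive on the relevant open set using the assumed $C^2$-smoothness. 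The only active equality constraint is the balance \eqref{for-Poincare}; after differentiation in $t$, this is equivalent to $\int_{\Rr}\beta(t,x)\dx = 0$ for all $t\in[0,T]$.

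Next, I set up an abstract Lagrange multiplier theorem (e.g.\ Ljusternik's theorem, or the formulation in Luenberger, Chapter 9) for the constraint map
\[
\Phi : \varphi \longmapsto \left( t \mapsto \int_{\Rr}\bigl(\varphi(t,x)-M_0(x)\bigr)\dx + \int_0^t Q(s)\ds \right)
\]
into $\{h\in C([0,T]):h(0)=0\}$. Its linearization $D\Phi[\varphi]\beta(t) = \int_{\Rr}\beta(t,x)\dx$ is surjective onto this space, as witnessed by variations of the form $\beta(t,x)=h(t)\chi(x)$ with $\chi\in C_c^\infty(-R,R)$ and $\int_{\Rr}\chi\dx = 1$. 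Combining surjectivity with the minimality of $\varphi$ yields a multiplier $w$ in the topological dual of $C([0,T])$. Under the $C^2$-smoothness of $\varphi$ and the smoothness of the data (Assumptions \ref{hyp: H convex}--\ref{hyp: V-uT Lipschitz 2nd D bounded DS}), integration by parts in the resulting stationarity identity forces $w$ to be a continuous (indeed $C^1$) function $w:[0,T]\to\Rr$ such that
\[
\left.\tfrac{d}{d\varepsilon}\tilde{I}[\varphi+\varepsilon\beta,w]\right|_{\varepsilon=0} = 0
\]
for every admissible variation $\beta$, with $\tilde{I}$ as in \eqref{eq: functional}. Proposition \ref{Pro: EL} then gives that $(\varphi,w)$ satisfies \eqref{eq: Euler-Lagrange wrt potential}--\eqref{boundary-in-phi}. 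Reversing the formal reduction of Section \ref{sec:Derivation of the variaitonal problem} by setting
\[
u(t,x) := u_T(x) - \int_t^T H\!\left(F'\!\left(\tfrac{\varphi_s(s,x)}{\varphi_x(s,x)}\right)\right)\ds - (T-t)V(x), \qquad m := \varphi_x,
\]
a direct computation shows that $(u,m,w)$ solves \eqref{eq:MFG system}--\eqref{boundaryP}. The uniqueness part of Theorem 1 in \cite{gomes2018mean}, available under Assumption \ref{hyp: V-uT convex DS}, then forces $w\equiv\varpi$, so $\varpi$ is realized as the Lagrange multiplier associated to $\varphi$, as claimed.

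The main obstacle I anticipate is the rigorous deployment of the multiplier rule: one must fix a Banach-space framework in which $\Phi$ is smooth between Banach spaces and $D\Phi[\varphi]$ is surjective, verify the corresponding constraint qualification at $\varphi$ (which is where the positivity of $\varphi_x$ and the smoothness assumption $\varphi\in C^2(\Omega_R)$ genuinely enter), and then bootstrap the abstract multiplier from a functional on $C([0,T])$ to a pointwise continuous function of $t$ that can be identified with the price. Once this has been carried out, the remainder of the argument is a clean bookkeeping exercise that couples Proposition \ref{Pro: EL} with the uniqueness of Problem \ref{PMFG}.
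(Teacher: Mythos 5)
Your overall strategy (abstract multiplier rule plus identification via Proposition \ref{Pro: EL} and uniqueness of Problem \ref{PMFG}) is a legitimate alternative in outline, but as written it has a genuine gap: the claim that the sign constraint $\varphi_x\geq 0$ is inactive at the minimizer is false, and this is exactly where the abstract route breaks down. The admissible set forces $\supp(\varphi_t(t,\cdot)),\supp(\varphi_x(t,\cdot))\subseteq(-R,R)$, and by Corollary \ref{corollary-m-comp} together with \eqref{eq:R1 selection} the minimizer satisfies $\varphi_x=m=0$ on the set $[0,T]\times\bigl([-R,R]\setminus[-R_1,R_1]\bigr)$, which has positive measure in $\Omega_R$. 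On that set the Lagrangian \eqref{L def} satisfies $L(z,0)=+\infty$ for $z\neq 0$, so for a generic two-sided variation $\beta$ one has $I[\varphi+\varepsilon\beta]=+\infty$ for every $\varepsilon\neq 0$: the functional $I$ is not Gateaux (let alone Fr\'echet) differentiable at $\varphi$ in any Banach space containing such directions, and the hypotheses of a Ljusternik/Luenberger-type theorem fail. The constraint qualification for your map $\Phi$ is not the issue; the differentiability of the objective is. Smoothness of $\varphi$ does not let you "take $\varphi_x$ strictly positive."

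The paper avoids this entirely by never invoking an abstract multiplier theorem. It perturbs $\varphi$ only inside the admissible set: $\varphi^\varepsilon$ is a convex combination of $\varphi$ with a competitor $\psi$ whose gradient is supported in $\supp(\varphi_x(t,\cdot))$, composed with a spatial translation by $\varepsilon(\overline{x}(t)-\overline{z}(t))$ (the difference of the centers of mass of $\varphi_x$ and $\psi_x$) chosen precisely so that the balance constraint \eqref{for-Poincare} is preserved; only the one-sided derivative $\lim_{\varepsilon\to 0^+}\tfrac{d}{d\varepsilon}I[\varphi^\varepsilon]\geq 0$ is used, with the conventions \eqref{eq:Lzdef}--\eqref{eq:Lydef} handling the degenerate region where $\varphi_x=0$. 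The multiplier then emerges with the explicit formula \eqref{eq:Lagrange multiplier def} as an average of the Euler--Lagrange expression against $\varphi_x$, and $\varpi$ is identified directly through \eqref{eq-LE}, without reconstructing $(u,m)$ and appealing to uniqueness. If you want to salvage your approach, you must restrict the variation space to one-sided perturbations supported where $\varphi_x>0$ that preserve nonnegativity and the mass normalization --- at which point you are essentially led back to the paper's construction, and the explicit formula for $w$ (which Theorem \ref{pro-connection} and the numerics require) still has to be extracted by integrating the Euler--Lagrange equation against $\varphi_x$.
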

\begin{proof}
The existence and uniqueness of the solution, $(u,m,\varpi)$, to Problem \ref{PMFG} follows from Theorem 1 in \cite{gomes2018mean}.  Because $L$ is convex by Remark \ref{rem-phi-in-u-m}, we have that $\varphi$ minimizes \eqref{functional-v}. Therefore, recalling Proposition \ref{pro-unique}, we deduce that $\varphi$ is the unique minimizer of \eqref{var-problem-c}.
Furthermore,  by Remark \ref{rem-phi-in-u-m} and Corollary \ref{corollary-m-comp} follows that there exists 
 $0<R_1<R$  such that 
\begin{equation}\label{eq:R1 selection}
	\supp(\varphi_x(t,\cdot))\subseteq[-R_1,R_1]\subseteq(-R,R), \quad t \in [0,T].
\end{equation}
Because $\varphi$ is the  minimizer of \eqref{var-problem-c}
	\begin{equation}\label{phi-prop}
	 \varphi_t(t,x)=0\quad a.e.\quad \text{in} \{\varphi_x(t,x)=0: (t,x)\in \Omega_R \}.
	\end{equation}
Let 
\begin{equation}\label{def-x_bar}
		\overline{x}(t):= \int_{\Rr} x \varphi_x(t,x) \dx=R-\int_{-R}^{R}\varphi(t,x)\dx, \quad t\in[0,T].
\end{equation}
Let $\psi\in W^{1,1}(\Omega_R)$ be such that $\psi(t,\cdot)$ is a cumulative distribution function on $(-R,R)$ for $t\in[0,T]$, and satisfies 
\begin{equation}\label{def-psi-prop}
	\psi(0,x)=\varphi(0,x),  \quad\supp(\psi_x(t,\cdot)) \subseteq\supp(\varphi_x(t,\cdot))\subseteq[-R_1,R_1]\subset(-R,R), \quad t\in[0,T],
\end{equation} 
and 
\begin{equation}\label{def-psi-prop2}
\psi_t(t,x)=0\quad a.e.\quad \text{in}\quad \{(t,x)\in \Omega_R:~ \psi_x(t,x)=0\}.
\end{equation}
Set
\begin{equation}\label{def-z_bar}
	\overline{z}(t):= \int_{\Rr} x \psi_x(t,x) \dx= R-\int_{-R}^{R}\psi(t,x)\dx, \quad t\in[0,T].
\end{equation}
Notice that $|\overline{x}-\overline{z}| \leq 4R$. Let $0<\varepsilon < \min\left\{\frac{R-R_1}{4R},1\right\}$. Thus,
\begin{equation}\label{eq-supp_psi_phi}
	[-R_1,R_1]\subset \left(-R-\varepsilon(\overline{x}(t)-\overline{z}(t)),R-\varepsilon(\overline{x}(t)-\overline{z}(t))\right)\cap(-R,R), \quad t \in [0,T].
\end{equation}
Let
\[
	\varphi^{\varepsilon}(t,x):=(1-\varepsilon)\varphi(t,x-\varepsilon(\overline{x}(t)-\overline{z}(t)))+\varepsilon\psi(t,x-\varepsilon(\overline{x}(t)-\overline{z}(t))).
\]
We claim that $\varphi^{\varepsilon}\in\mathcal{A}(\Omega_R)$. Indeed, by \eqref{def-psi-prop} and \eqref{eq-supp_psi_phi}, we have
\begin{equation*}%\label{eq-psi_ep-pro1}
\begin{split}
	\varphi_x^{\varepsilon}\geq0, \quad \int_{-R}^{R}\varphi^{\varepsilon}_x\dx=1,& \quad\varphi^{\varepsilon}(0,x)=M_0(x)\quad x\in[-R,R],\\&\varphi^{\varepsilon}(t,-R)=0,\quad t\in[0,T].
\end{split}
\end{equation*}
It remains to prove that $\varphi^{\varepsilon}$ satisfies the balance condition; that is,
\begin{equation}\label{eq-bal-for-varphi_ep}
	\int_{-R}^{R}  \varphi^\varepsilon(t,x) \dx=-\int_{0}^{t}Q(s)ds+\int_{-R}^{R}  M_0(x) \dx, \quad t\in[0,T].
\end{equation}
Because $\varphi\in\mathcal{A}(\Omega_R)$, \eqref{def-x_bar} shows that to prove \eqref{eq-bal-for-varphi_ep} it is enough to verify that
\begin{equation*}%\label{eq-var_ep-mass}
	\int_{\Rr} x \varphi^\varepsilon_x (t,x) \dx = \overline{x}(t), \quad t\in[0,T].
\end{equation*}
Computing the left-hand side of the previous identity, we have
\begin{align*}
	&\int_{\Rr} x \varphi^\varepsilon_x (t,x) \dx 
	\\
	& = (1-\varepsilon) \int_{\Rr} x \varphi_x (t,x-\varepsilon(\overline{x}(t)-\overline{z}(t))) \dx + \varepsilon \int_{\Rr} x \psi_x (t,x-\varepsilon(\overline{x}(t)-\overline{z}(t))) \dx
	\\
	& = (1-\varepsilon) \int_{\Rr} x \varphi_x (t,x) \dx + \varepsilon(\overline{x}(t)-\overline{z}(t))  + \varepsilon \int_{\Rr} x \psi_x (t,x) \dx + \varepsilon(\overline{x}(t)-\overline{z}(t)) 
	\\
	& = (1-\varepsilon) \left(\overline{x}(t) + \varepsilon(\overline{x}(t)-\overline{z}(t))\right) + \varepsilon \left( \overline{z}(t) + \varepsilon(\overline{x}(t)-\overline{z}(t))\right)
	\\
	& = \overline{x}(t).
\end{align*}
Therefore, $\varphi^{\varepsilon}\in\mathcal{A}(\Omega_R)$, and the map $\varepsilon \mapsto I[\varphi^\varepsilon ]$ has a minimum at $\varepsilon=0$; that is,
\[
	\lim_{\varepsilon\to 0^+} \frac{d}{d\varepsilon} I[\varphi^\varepsilon ] \geq 0.
\]
To compute the left-hand side of the previous inequality, we notice that
\begin{align*}
	 \varphi^\varepsilon_t(t,x) =& (1-\varepsilon)\bigg( \varphi_t(t,x-\varepsilon \left( \overline{x}(t)-\overline{z}(t)\right)) - \varphi_x(t,x-\varepsilon \left( \overline{x}(t)-\overline{z}(t)\right)) \varepsilon \left( \dot{\overline{x}}(t)-\dot{\overline{z}}(t)\right)\bigg)
	 \\
	&+\varepsilon\bigg( \psi_t(t,x-\varepsilon \left( \overline{x}(t)-\overline{z}(t)\right)) - \psi_x(t,x-\varepsilon \left( \overline{x}(t)-\overline{z}(t)\right)) \varepsilon \left( \dot{\overline{x}}(t)-\dot{\overline{z}}(t)\right)\bigg)  ,
	\\
	 \varphi^\varepsilon_x(t,x ) = & (1-\varepsilon)\varphi_x(t,x-\varepsilon \left( \overline{x}(t)-\overline{z}(t)\right)) +\varepsilon\psi_x(t,x-\varepsilon \left( \overline{x}(t)-\overline{z}(t)\right)). 
\end{align*}
Note that \eqref{phi-prop} and \eqref{def-psi-prop2} imply
\begin{equation}\label{def-varphiep-prop}
	\varphi^\varepsilon_t(t,x)=0\quad a.e.\quad \text{in}\quad \{\varphi^\varepsilon_x(t,x)=0: (t,x)\in \Omega_R \}.
\end{equation}
Furthermore,
\begin{align*}
	\lim_{\varepsilon\to 0^+} \dfrac{d}{d\varepsilon}\varphi^\varepsilon_t &= -\varphi_t(t,x) - \varphi_{tx} \left( \overline{x}(t)-\overline{z}(t)\right) - \varphi_x(t,x) \left( \dot{\overline{x}}(t)-\dot{\overline{z}}(t)\right) + \psi_t(t,x)
	\\
	&=\dfrac{d}{dt}\bigg(-\varphi(t,x) + \psi(t,x) - \varphi_x(t,x) \left( \overline{x}(t)-\overline{z}(t)\right) \bigg),
	\\
	\lim_{\varepsilon\to 0^+} \dfrac{d}{d\varepsilon}\varphi^\varepsilon_x  &= -\varphi_x(t,x) - \varphi_{xx} \left( \overline{x}(t)-\overline{z}(t)\right) + \psi_x(t,x)
	\\
	&=\dfrac{d}{dx}\bigg(-\varphi(t,x) + \psi(t,x) - \varphi_x(t,x) \left( \overline{x}(t)-\overline{z}(t)\right) \bigg).
\end{align*}
For ease of notation, we denote
	\begin{equation}\label{eq:Lzdef}
	L^*_z(t,x) =	\begin{cases}
	L_z(\varphi_t(t,x),\varphi_x(t,x)),\quad \varphi_x(t,x)>0\\
	0, \quad \text{otherwise},
		\end{cases}
	\end{equation}
and
	\begin{equation}\label{eq:Lydef}
L^*_y(t,x) =	\begin{cases}
		 L_y(\varphi_t(t,x),\varphi_x(t,x)),\quad \varphi_x(t,x)>0\\
		0, \quad \text{otherwise}.
	\end{cases}
\end{equation}
%\[
%	L^*_z(t,x) = L_z(\varphi_t(t,x),\varphi_x(t,x)), \quad L^*_y(t,x) = L_y(\varphi_t(t,x),\varphi_x(t,x)).
%\]
% Let $\varphi(t_0,x_0)>0$ for some $(t_0,x_0)\in\Omega_{R}$. Because $ \varphi$ is smooth, there exists a neighborhood, $\delta_0$, of $(t_0,x_0)$ such that $\varphi(t,x)>0$  for all $(t,x)\in\delta_0$. We fix $\psi$ such that $\supp(\psi)\subset\delta_0$.  
Taking into account \eqref{phi-prop} and \eqref{def-psi-prop2},  we obtain
\begin{align*}
& \lim_{\varepsilon\to 0^+} \frac{d}{d\varepsilon} I[\varphi^\varepsilon ] 
\\
&= \int_0^T \int_{-R}^{R} \left(L^*_z(t,x)-u'_T(x)\right)\dfrac{d}{dt}\bigg(\psi(t,x)-\varphi(t,x)-\varphi_x(t,x)\left( \overline{x}(t)-\overline{z}(t)\right) \bigg) \dx \dt
\\
& \quad + \int_0^T \int_{-R}^{R}\left(L^*_y(t,x)-V(x)\right)\dfrac{d}{dx}\bigg(\psi(t,x)-\varphi(t,x)-\varphi_x(t,x)\left( \overline{x}(t)-\overline{z}(t)\right) \bigg) \dx \dt.
\end{align*}
Integrating by parts in the right-hand side of the previous identity, using that $\overline{x}(0)=\overline{z}(0)$, and recalling \eqref{eq:R1 selection} and \eqref{def-psi-prop}, we obtain
\begin{align}\label{eq:Aux after integrating by parts}
& \lim_{\varepsilon\to 0^+} \frac{d}{d\varepsilon} I[\varphi^\varepsilon ]  \nonumber
\\
&= \int_{-R}^{R} \left(L^*_z(T,x)-u'_T(x)\right)\bigg(\psi(T,x)-\varphi(T,x)-\varphi_x(T,x)\left( \overline{x}(T)-\overline{z}(T)\right) \bigg) \dx \nonumber
\\
& \quad - \int_0^T \int_{-R}^{R}\bigg( \left(L^*_z(t,x)\right)_t+\left(L^*_y(t,x)-V(x)\right)_x \bigg) \nonumber
\\
& \quad \quad \quad \quad \quad \quad \bigg(\psi(t,x)-\varphi(t,x)-\varphi_x(t,x)\left( \overline{x}(t)-\overline{z}(t)\right) \bigg) \dx \dt.
\end{align}
Recalling \eqref{def-x_bar} and \eqref{def-z_bar}, we have
\begin{equation*}%\label{eq-key}
\overline{x}(t)-\overline{z}(t)=\int_{-R}^{R} \left(\psi(t,x) - \varphi(t,x)\right)\dx, \quad t \in [0,T].
\end{equation*}
Using the previous identity, we write the first term on the right-hand side of \eqref{eq:Aux after integrating by parts} as follows
\begin{align}\label{eq:Aux after integrating by parts term1}
& \int_{-R}^{R} \left(L^*_z(T,x)-u'_T(x)\right)\bigg(\psi(T,x)-\varphi(T,x)-\varphi_x(T,x)\left( \overline{x}(T)-\overline{z}(T)\right) \bigg) \dx \nonumber
\\
& = \int_{-R}^{R} \bigg( L^*_z(T,x)-u'_T(x) - \int_{-R}^{R} (L^*_z(T,y)-u'_T(y))\varphi_x(T,y) \dy \bigg) \left( \psi(T,x)-\varphi(T,x)\right) \dx.
\end{align}
Similarly, the second term on the right-hand side of \eqref{eq:Aux after integrating by parts} becomes
\begin{align}\label{eq:Aux after integrating by parts term2}
&\int_0^T \int_{-R}^{R}\bigg( \left(L^*_z(t,x)\right)_t+\left(L^*_y(t,x)-V(x)\right)_x \bigg)\left(\psi(t,x)-\varphi(t,x)-\varphi_x(t,x)\left( \overline{x}(t)-\overline{z}(t)\right) \right) \dx \dt \nonumber
\\
&= \int_0^T \int_{-R}^{R} \bigg( \left(L^*_z(t,x)\right)_t +  \left(L^*_y(t,x)-V(x)\right)_x  \nonumber
\\
& \quad\quad\quad\quad\quad - \int_{-R}^{R} \bigg( \left(L^*_z(t,y)\right)_t +  \left(L^*_y(t,y)-V(y)\right)_x \bigg) \varphi_x(t,y) \dy \bigg) \left( \psi(t,x)-\varphi(t,x) \right) \dx \dt. 
\end{align}
Define the Lagrange multiplier by
\begin{align}\label{eq:Lagrange multiplier def}
& w_T = \int_{-R}^{R} (L^*_z(T,y)-u'_T(y))\varphi_x(T,y) \dy , \nonumber
\\
& w(t) = \int_{-R}^{R} \bigg( \left(L^*_z(t,y)\right)_t +  \left(L^*_y(t,y)-V(y)\right)_x \bigg) \varphi_x(t,y) \dy , \quad t\in[0,T].
\end{align}

Then, replacing \eqref{eq:Aux after integrating by parts term1} and \eqref{eq:Aux after integrating by parts term2} in \eqref{eq:Aux after integrating by parts}, we get
\begin{align}\label{eq:Aux first variation}
& \int_{-R}^{R} \bigg( L^*_z(T,x)-u'_T(x) - w_T\bigg) \left( \psi(T,x)-\varphi(T,x)\right) \dx \nonumber
\\
& + \int_0^T \int_{-R}^{R} \bigg( \left(L^*_z(t,x)\right)_t +  \left(L^*_y(t,x)-V(x)\right)_x - w(t) \bigg) \left( \psi(t,x)-\varphi(t,x) \right) \dx \dt \geq 0.
\end{align}
Notice that, in the previous inequality, the function $\phi=\psi-\varphi$ can be selected to be strictly positive or negative in any neighborhood of $(0,T)\times(-R,R)$. Therefore, we can infer the nullity of the functions in both integrals in \eqref{eq:Aux first variation} as follows. First, select $\psi$ satisfying $\psi(T,\cdot)=\varphi(T,\cdot)$. Then, \eqref{eq:Aux first variation} shows that 
\begin{align}\label{eq:Aux first variation conclusion1}
\int_0^T \int_{-R}^{R} \bigg( \left(L^*_z(t,x)\right)_t +  \left(L^*_y(t,x)-V(x)\right)_x - w(t) \bigg) \phi(t,x) \dx \dt \geq 0.
\end{align}
The regularity of 
\[
	(t,x)\mapsto \left(L^*_z(t,x)\right)_t +  \left(L^*_y(t,x)-V(x)\right)_x - w(t)
\]
allows the localization of the integral in \eqref{eq:Aux first variation conclusion1} using $\phi$, and we conclude that 
\begin{align}\label{eq:Aux first variation Integral conclusion1}
\left(L^*_z(t,x)\right)_t +  \left(L^*_y(t,x)-V(x)\right)_x - w(t) = 0 \quad \mbox{a.e. }(t,x) \in (0,T)\times (-R,R).
\end{align}
Then, \eqref{eq:Aux first variation} reduces to
\begin{align*}
& \int_{-R}^{R} \bigg( L^*_z(T,x)-u'_T(x) - w_T\bigg) \left( \psi(T,x)-\varphi(T,x)\right) \dx \geq 0,
\end{align*}
and we proceed as before by localizing the integral using $x \mapsto \phi(T,x)$ to conclude that 
\begin{align}\label{eq:Aux first variation Integral conclusion2}
L^*_z(T,x)-u'_T(x) - w_T = 0 \quad \mbox{a.e. }x \in (-R,R).
\end{align}
Recalling \eqref{eq-LE}, which characterizes $\varpi$, the identities \eqref{eq:Aux first variation Integral conclusion1} and \eqref{eq:Aux first variation Integral conclusion2} show that the price, $\varpi$, is given by the Lagrange multiplier \eqref{eq:Lagrange multiplier def} according to
\[
	\dot{\varpi}(t) = w(t), \quad t\in[0,T],\quad  \varpi(T)=w_T. \qedhere
\]
\end{proof}

\begin{proof}[Proof of Theorem \ref{pro-connection}] Because $\varphi$ solves Problem \ref{problem:Variational}, we obtain $\varpi$ according to Proposition \ref{pro-lag-mult}. Therefore, $(\varphi,\varpi)$ minimizes \eqref{eq: functional}, and by Proposition \ref{Pro: EL} satisfies an Euler-Lagrange equation equivalent to \eqref{eq: Euler-Lagrange wrt potential}-\eqref{boundary-in-phi}. Since the solution $(u,m,\varpi)$ of \eqref{eq:MFG system} defines a potential function according to \eqref{eq:Potential in terms of MFGs} which satisfies \eqref{eq: Euler-Lagrange wrt potential}-\eqref{boundary-in-phi}, the convexity of \eqref{eq: functional} implies that this potential is a minimizer of \eqref{eq: functional}. Thus, by Proposition \ref{pro-unique} , we conclude that the potential function defined by $(u,m,\varpi)$ coincides with the minimizer $\varphi$. Thus, we can recover $u$ and $m$ using \eqref{eq: potential relations}; that is,
\begin{equation}\label{eq:u as a function of phi}
	u(t,x) = u_T(x) - \int_t^T H\left(F'\left(\frac{\varphi_t(s,x)}{\varphi_x(s,x)}\right)\right) \ds - (T-t)V(x), \quad (t,x) \in [0,T]\times \Rr,
\end{equation}
where the right-hand side of the previous expression is well defined because $\varphi_x$ and $\varphi_t$ have the same compact support, and $m(t,x)=\varphi_x(t,x)$, $(t,x) \in [0,T]\times \Rr$. 
\end{proof}

%--------------NUMERICALRESULTS--------------------
%--------------NUMERICALRESULTS--------------------
%--------------NUMERICALRESULTS--------------------
%--------------NUMERICALRESULTS--------------------
%--------------NUMERICALRESULTS--------------------
\section{Numerical results}\label{sec:numerical results}

In this section, we provide the results of the potential approach applied to the price formation MFG system with quadratic cost and oscillating supply. We use the semi-explicit formulas introduced in \cite{gomes2018mean} to assess the error in our approximation. We use the standard solver for finite-dimensional convex problems provided by the software Mathematica to approximate the potential function in a discrete grid in time and space.

Let $\kappa\in \Rr$, $\eta\geq 0$, and $c>0$. For the quadratic cost configuration, we take
\[
	H(p) = \frac{1}{2c} p^2, \quad V(x) = -\frac{\eta}{2} \left( x - \kappa\right)^2, \quad \mbox{and} \quad u_T\left(x\right) \equiv 0.
\]
Thus, $F(v) = \frac{c}{2}v^2$. As shown in \cite{gomes2018mean} and \cite{gomes2021randomsupply}, a feature of the quadratic setting is the solvability of the Hamilton-Jacobi equation in \eqref{eq:MFG system} in the class of quadratic functions of $x$ with time-dependent coefficients 
\[
	u(t,x)=a_0(t) + a_1(t)x + a_2(t)x^2, \quad t\in[0,T],~ x \in \Rr.
\]
The coefficients $a_0$, $a_1$ and $a_2$ solve an ODE system that derives from the Hamilton-Jacobi equation by matching powers of the $x$ variable. Figure \ref{fig:value function u} shows the value function for $(x,t) \in [0,T]\times[-1,1]$. Moreover, the price has the following explicit formula
\begin{equation*}%\label{eq:LQ price formula}
	\varpi(t)=\eta \left(\kappa-\overline{m}_0\right)\left(T-t\right)- \eta \int_t^T \int_0^s Q(r) dr ds - c Q(t), \quad t\in[0,T],
\end{equation*}
where $\overline{m}_0 = \int_{\Rr} x m_0(x) dx$. The initial condition $m_0$ is centered at $x=0$ and with compact support $[-0.5,0.5]$ (see Figure \ref{fig:m0 plot}). The vector-field transporting $m_0$ is
\[
	b(t,x) = - \frac{1}{c}\left(\varpi(t) + a_1(t) + 2a_2(t)x\right), \quad t\in[0,T],~ x \in \Rr,
\]
which we use to compute $m$ using the method of characteristics (see Figure \ref{fig:m analsol supp}). Thus, recalling \eqref{eq: potential relations} and \eqref{eq:Potential in terms of MFGs}, we have explicit formulas for $\varphi$, $\varphi_x$, and $\varphi_t$. We use the previous expressions as a benchmark for the approximation obtained using \eqref{eq:Lagrange multiplier def}. 

For the discretization of the time variable, we set $T=1$ and $N_t = 20$ time steps uniformly spaced. Thus, $h_t = 0.05$ is the time step size. To discretize the space variable, the selection of $R$ in \eqref{eq:R selection}, where $R_0 = 0.5$, becomes
\[
	R > \max\{5.57742~,\quad 0.811579\}.
\]
However, to simplify the computational cost, we optimize the selection of $R$ by looking at the support of $m(t,)$ for $t\in[0,T]$, which we illustrate in Figure \ref{fig:m analsol supp}. Thus, we discretize the space variable in the space domain $[-1,1]$ using $N_x=40$ time steps equally spaced. Thus, $h_x =0.05$ is the step size. 

Because in several applications the supply function satisfies a mean reversion assumption, we assume that it follows the ordinary differential equation
\[
	\begin{cases}
	\dot{Q}(t) = \overline{Q}(t) - \alpha Q(t), & t \in [0,T],
	\\ 
	Q(0) = q_0,
	\end{cases}
\]
where $\overline{Q}:[0,T]\to \Rr$ represents the average supply over time, $\alpha \in \Rr$ measures the tendency to towards the average, and $q_0\in \Rr$ is the initial supply. For numerical purposes, we select
\[
	\overline{Q}(t) = 5 \sin(3 \pi t), \quad \alpha = 4, \quad q_0=-0.5.
\]
While the particular choice of $Q$ does not change the problem substantially, the preceding choice has oscillatory features, as we want to demonstrate how price changes and at the same time gives simple analytic expressions. As Figure \ref{fig:price and supply} shows, the price inherits the oscillating behavior from the supply. 

%%%%%%Analytical sol%%%%%%%%%%
%%%%%%Analytical sol%%%%%%%%%%
%%%%%%Analytical sol%%%%%%%%%%
\begin{figure}[htp]
     \centering
     \begin{subfigure}[t]{0.34\textwidth}
		\vskip0cm         
         \centering        
         \includegraphics[width=\textwidth]{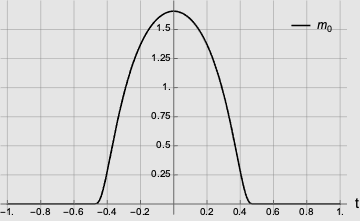}
         \caption{$m_0$}
         \label{fig:m0 plot}
     \end{subfigure}     
     \hfill
     \begin{subfigure}[t]{0.36\textwidth}
		\vskip0cm
         \centering        
         \includegraphics[width=\textwidth]{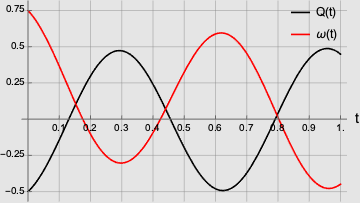}
         \caption{Price and supply}
         \label{fig:price and supply}
     \end{subfigure}
     \hfill
     \begin{subfigure}[t]{0.28\textwidth}
		\vskip0cm         
         \centering
         \includegraphics[width=\textwidth]{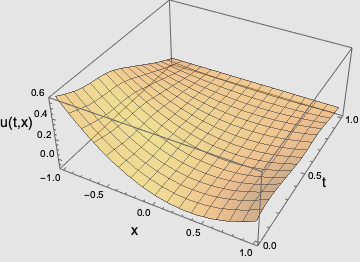}
        \caption{value function $u$}         
         \label{fig:value function u}
     \end{subfigure}
        \caption{Data $m_0$ and $Q$, and solutions $u$ and $\varpi$ for $\overline{Q}(t) = 5 \sin(3 \pi t)$.}
%        \label{fig:Qbar constant analres}
\end{figure}
Using the solution $(u,m,\varpi)$, we get $\varphi_t$, $\varphi_x$ from \eqref{eq: potential relations} (see Figure \ref{fig:m analsol supp}), and so \eqref{eq:Potential in terms of MFGs} gives $\varphi$, illustrated in Figure \ref{fig:Analsol phi}. The value of \eqref{var-problem-c} is $0.106525$, which we use as an additional benchmark to assess our numerical approximation.
%%%%%%Analytical sol potential%%%%%%%%%%
%%%%%%Analytical sol potential%%%%%%%%%%
%%%%%%Analytical sol potential%%%%%%%%%%
\begin{figure}[htp]
     \centering
     \begin{subfigure}[t]{0.32\textwidth}
         \centering
         \includegraphics[width=\textwidth]{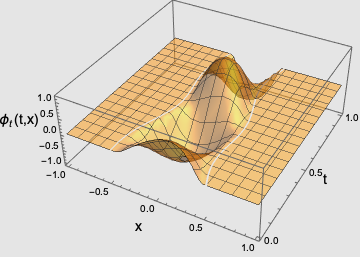}
         \caption{$\varphi_t$}
%         \label{fig:phi_t}
     \end{subfigure}
     \hfill
     \begin{subfigure}[t]{0.32\textwidth}
         \centering
         \includegraphics[width=\textwidth]{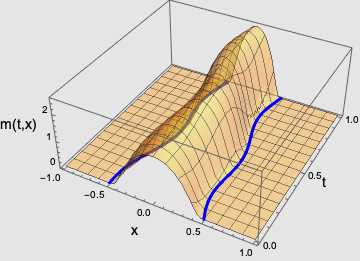}
        \caption{$\varphi_x=m$}         
         \label{fig:m analsol supp}
     \end{subfigure}   
     \hfill
     \begin{subfigure}[t]{0.32\textwidth}
         \centering
         \includegraphics[width=\textwidth]{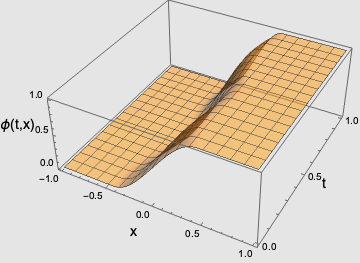}
        \caption{potential $\varphi$}         
%         \label{fig:potential}
     \end{subfigure}       
        \caption{Analytic solution $\varphi$ and its partial derivatives for $\overline{Q}(t) = 5 \sin(3 \pi t)$. The blue lines outline the support of $m$.}
        \label{fig:Analsol phi}
\end{figure}

We discretize \eqref{var-problem-c} over the time-space grid using finite differences to approximate $\varphi_t$ and $\varphi_x$; that is
\[
	\varphi_t(t_i,x_j)=\dfrac{\varphi(t_i+h_t,x_j)-\varphi(t_i,x_j)}{h_t}, \quad \varphi_x(t_i,x_j)=\dfrac{\varphi(t_i,x_j+h_x)-\varphi(t_i,x_j)}{h_x},
\]
for $i=1,\ldots,20,$ and $j=1,\ldots,40$. We obtain a finite-dimensional convex optimization problem with the following constraints
\begin{align*}
	& \varphi_x(t_i,x_j)\geq 0,\quad \sum_{j=1}^{N_x} \left(\varphi(t_i,x_j) - M_0(x_j)\right) h_x + \sum_{k=0}^{i} Q(t_k) h_t, 
	\\
	& \varphi(0,x_j) - M_0(x_j) = 0, \quad \varphi(t_i,-1)=0, \quad \varphi(t_i,1)=1, \quad i=1,\ldots,N_t, \; j=1,\ldots,N_x,
\end{align*}
which correspond to the discretization of the admissible set $\mathcal{A}(\Omega_R)$ (see \eqref{def:admissible-A}). The results are depicted in Figure \ref{fig:Applsol phi}. The approximated value of \eqref{var-problem-c} is $0.103765$, in good agreement with the theoretical value $0.106525$.

%%%%%%Approx sol potential Yuri%%%%%%%%%%
%%%%%%Approx sol potential Yuri%%%%%%%%%%
%%%%%%Approx sol potential Yuri%%%%%%%%%%
\begin{figure}[htp]
     \centering

     \begin{subfigure}[t]{0.34\textwidth}
         \centering
         \vskip0cm
         \includegraphics[width=\textwidth]{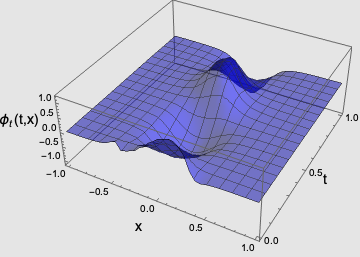}
        \caption{$\varphi_t$ approximation}         
%         \label{fig:potential}
     \end{subfigure}     
     \quad 
     \begin{subfigure}[t]{0.36\textwidth}
         \centering
         \vskip0cm         
         \includegraphics[width=\textwidth]{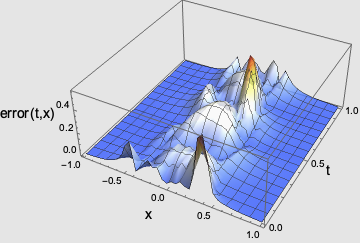}
         \caption{approximation error}         
%         \label{fig:phi_t}
     \end{subfigure}     
     
    \begin{subfigure}[t]{0.34\textwidth}
         \centering
         \vskip0cm         
         \includegraphics[width=\textwidth]{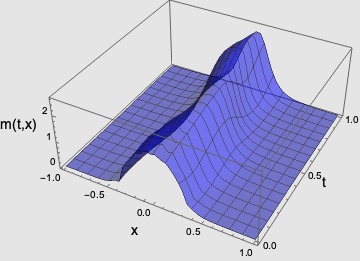}
        \caption{$\varphi_x=m$ approximation}         
%         \label{fig:potential}
     \end{subfigure}     
     \quad 
     \begin{subfigure}[t]{0.36\textwidth}
         \centering
         \vskip0cm         
         \includegraphics[width=\textwidth]{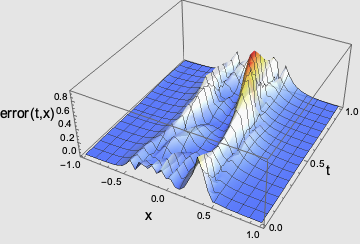}
         \caption{approximation error}         
%         \label{fig:phi_t}
     \end{subfigure}    
     
     \begin{subfigure}[t]{0.34\textwidth}
         \centering
         \vskip0cm         
         \includegraphics[width=\textwidth]{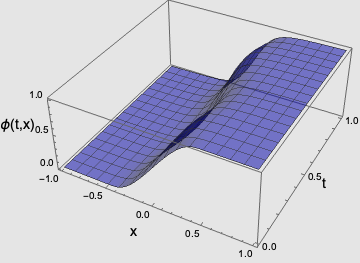}
        \caption{$\varphi$ approximation}         
%         \label{fig:potential}
     \end{subfigure}     
     \quad 
     \begin{subfigure}[t]{0.36\textwidth}
         \centering
         \vskip0cm         
         \includegraphics[width=\textwidth]{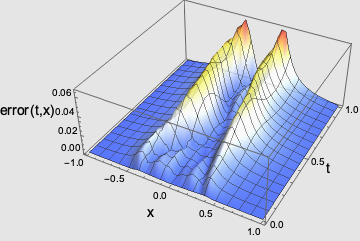}
         \caption{approximation error}         
%         \label{fig:phi_t}
     \end{subfigure}           
        \caption{Approximated solution $\varphi$ for $\overline{Q}(t) = 5 \sin(3 \pi t)$.}
        \label{fig:Applsol phi}
\end{figure}

Using \eqref{eq:Lagrange multiplier def}, we obtain the corresponding approximation of $\varpi$, illustrated in Figure \ref{fig:Applsol price}. Because of the implementation of finite differences, we can compute the price on the time horizon $[2 h_t,T]$. The plots show good agreement between the values of our numerical results with a small discrepancy that improves as the grid size increases (here, we show the results for the finest grid we used).

%%%%%%Approx sol price Yuri%%%%%%%%%%
%%%%%%Approx sol price Yuri%%%%%%%%%%
%%%%%%Approx sol price Yuri%%%%%%%%%%
\begin{figure}[htp]
     \centering
     \begin{subfigure}[t]{0.38\textwidth}
         \centering
         \includegraphics[width=\textwidth]{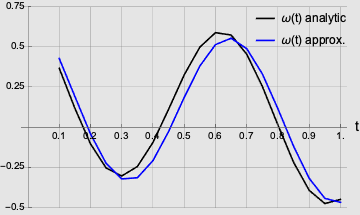}
        \caption{$\varpi$ approximation}         
%         \label{fig:potential}
     \end{subfigure}     
     \quad 
     \begin{subfigure}[t]{0.38\textwidth}
         \centering
         \includegraphics[width=\textwidth]{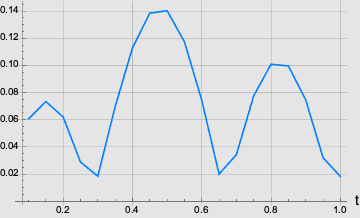}
         \caption{approximation error}         
%         \label{fig:phi_t}
     \end{subfigure}
        \caption{Approximated solution $\varpi$ for $\overline{Q}(t) = 5 \sin(3 \pi t)$.}
        \label{fig:Applsol price}
\end{figure}

As the last benchmark, we consider the value function $u$. To compute $u$, we round the approximation to avoid indeterminate expressions and we use \eqref{eq:u as a function of phi}. The result is depicted in Figure \ref{fig:Applsol u}. Again, we obtain a good agreement with the exact solution.

%%%%%%Approx sol u Yuri%%%%%%%%%%
%%%%%%Approx sol u Yuri%%%%%%%%%%
%%%%%%Approx sol u Yuri%%%%%%%%%%
\begin{figure}[htp]
     \centering
     \begin{subfigure}[t]{0.34\textwidth}
         \centering
         \vskip0cm
         \includegraphics[width=\textwidth]{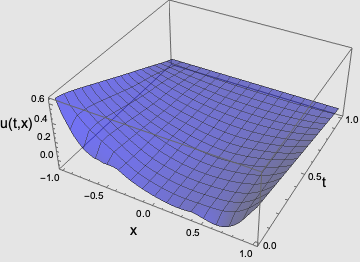}
        \caption{$u$ approximation}         
%         \label{fig:potential}
     \end{subfigure}     
     \quad 
     \begin{subfigure}[t]{0.36\textwidth}
         \centering
         \vskip0cm         
         \includegraphics[width=\textwidth]{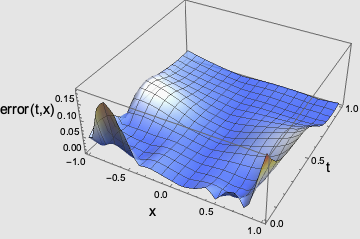}
         \caption{approximation error}         
%         \label{fig:phi_t}
     \end{subfigure}
        \caption{Approximated solution $u$ for $\overline{Q}(t) = 5 \sin(3 \pi t)$.}
        \label{fig:Applsol u}
\end{figure}

%--------------CONCLUSIONS--------------------
%--------------CONCLUSIONS--------------------
%--------------CONCLUSIONS--------------------
%--------------CONCLUSIONS--------------------
%--------------CONCLUSIONS--------------------
\section{Conclusions and further directions}

In this paper, we presented a variational approach based on Poincar\'{e} Lemma, reducing one variable in the MFG price formation model. We studied the variational approach independently of the MFG problem. We obtained existence for a relaxed formulation using bounded variation functions, and we proved uniqueness of the potential function. We showed that price existence follows a Lagrange multiplier rule associated with the balance constrained, an integral equation for the MFG model depending on a supply function. For the price problem, the variational formulation allows an efficient computation without solving the backward-forward coupled problem with integral constraints. The convexity of the variational approach allows the use of standard optimization tools to solve its discrete formulation. Our numerical method shows promising results and good agreement with the explicit solutions. We consider we can apply a similar approach to the price formation model with common noise, which corresponds to the case of a stochastic supply function. One challenge is the dependence of the variational problem formulation on the supply, requiring the discretization of time, state variables, and the common noise. We plan to investigate this case in future works.

\bibliographystyle{plain}
% Diogo
%\IfFileExists{"/Users/gomesd/mfgDGOFFICE.bib"}
%{\bibliography{/Users/gomesd/mfgDGOFFICE.bib}}

\bibliography{mfg.bib}

%	\IfFileExists{"/Users/gutierjd/Dropbox/price_duality&potential_Diogo_Tigran_Yuri_Julian/Potential/V6/mfg.bib"}{\bibliography{/Users/gutierjd/Dropbox/price_duality&potential_Diogo_Tigran_Yuri_Julian/Potential/V6/mfg.bib}}

%	\IfFileExists{"/Users/gutierjd/Dropbox/V6/mfg.bib"}{\bibliography{/Users/gutierjd/Dropbox/V6/mfg.bib}}

% Diogo
%\IfFileExists{"/Users/gomesd/mfgDGOFFICE.bib"}{
%	\bibliography{/Users/gomesd/mfgDGOFFICE.bib}}{\bibliography{mfg.bib}}

%Tigran
%	\IfFileExists{"/Users/bakaryt/Dropbox/Projects/mfgTig.bib"}{\bibliography{/Users/bakaryt/Dropbox/Projects/mfgTig.bib}}
%  ln "/Users/bakaryt/Dropbox/MasterBIB/mfg.bib" "/Users/bakaryt/Dropbox/Projects/mfgTig.bib"

\end{document}